\documentclass{article}
\usepackage{mystyle}
\usepackage{mathtools}

\usepackage[
backend=biber,
style=alphabetic,
sorting=ynt
]{biblatex}
\DeclareMathOperator{\length}{length}

\DeclareMathOperator{\supp}{supp}
\DeclareMathOperator{\ch}{char}

\DeclareMathOperator{\coh}{\textbf{Coh}}

\DeclareMathOperator{\red}{red}

\renewcommand{\O}{O}

\newcommand{\A}{\mathbf{A}}

\renewcommand{\phi}{\varphi}

\newcommand*{\shext}{\mathcal{E}\kern -.5pt xt}
\newcommand*{\shtor}{\mathcal{T}\kern -.5pt or}

\title{Asymptotic Cohomological Functions and Volumes on Proper Schemes}
\author{Wenqi Li}
\begin{document}

\maketitle

\begin{abstract}
    We study the behavior of asymptotic cohomological functions on proper schemes over a field. We prove a formula for how asymptotic cohomological functions behave under perturbation by an object in the bounded derived category. Using the same method, we also prove that the volume of a line bundle on any proper scheme over a field always exists as a limit.
\end{abstract}

\section{Introduction}

The volume of a line bundle $\cur{L}$ on a proper scheme $X$ of dimension $d$ over a field, defined as
\[
\operatorname{vol}_X(\cur{L}) = \limsup_{m \to \infty} \frac{\dim H^0(X, \cur{L}^{\otimes m})}{m^d/d!},
\]
is an invariant that measures the asymptotic growth of global sections of $\cur{L}^{\otimes m}$ as $m$ goes to infinity. Analogously, one can define
\[
\widehat{h}^i(X, \cur{L}) = \limsup_{m \to \infty} \frac{\dim H^i(X, \cur{L}^{\otimes m})}{m^d/d!}
\]
for $i > 0$. This is defined and studied in \cite{alex} when $X$ is a projective variety (the word variety will mean a separated integral scheme of finite type over a field). More generally, if $\cur{F}$ is a coherent sheaves on $X$, we can consider the quantity
\[
\limsup_{m \to \infty} \frac{\dim H^i(X, \cur{F} \otimes \cur{L}^{\otimes m})}{m^d/d!}.
\]
We will refer to these quantities as higher asymptotic cohomological functions.

A fundamental question about the volume and the higher asymptotic cohomological functions is whether the defining limsups exist as limits. It is known classically that the volume is a limit when the scheme $X$ is a non-singular variety. It is proven in \cite[Theorem 10.7]{Cutkoskygenred} that the volume of $\cur{L}$ exists as a limit if $X$ is generically reduced. More recently, \cite{Cutnunez} and \cite{Nunez} proved that the volume exists as a limit if $X$ is a codimension-$1$ closed subscheme of a non-singular projective variety, or the ideal sheaf of nilpotents of $X$ is square-zero. In these cases, the scheme $X$ is still close to being reduced. On the other hand, Cutkosky (\cite[Theorem 10.3]{Cutkoskygenred}) showed that when $X$ is not generically reduced, the volume of a general graded linear series can fail to exist as a limit. 

For higher cohomological functions, the ``limsup versus limit'' problem is not known even for projective varieties, except in a few special cases. For example, for toric varieties, \cite{Hering2006} proved $\widehat{h}^i$ exists as limits for toric divisors. In this paper, we will prove in Theorem \ref{limit} and Theorem \ref{singlelimit} that the higher cohomological functions on $X$ exist as limits whenever they do on the reduced subscheme $X_{\red}$. In particular, we solve the ``limsup versus limit'' problem for volumes of line bundles in general:
\begin{Theorem}\label{volumelimit}
    Let $X$ be a proper scheme over a field $k$, and let $\cur{L}$ be a line bundle on $X$. The volume of $\cur{L}$ exists as a limit.
\end{Theorem}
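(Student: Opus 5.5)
We reduce to the reduced (indeed integral) case by filtering $\cur{O}_X$ by powers of the nilradical, and then control the error terms using the vanishing of higher asymptotic cohomology on the reduced pieces.

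\textbf{Reduction to the reduced case.} Let $\cur{N}\subset\cur{O}_X$ be the nilradical, with $\cur{N}^{r}=0$. The filtration $\cur{O}_X\supset\cur{N}\supset\cur{N}^2\supset\dots\supset\cur{N}^r=0$ has graded pieces $\cur{G}_j=\cur{N}^j/\cur{N}^{j+1}$. Each $\cur{G}_j$ is a coherent $\cur{O}_{X_{\red}}$-module, so $\cur{G}_j\otimes\cur{L}^{\otimes m}=\cur{G}_j\otimes\cur{L}_{\red}^{\otimes m}$. Tensoring with $\cur{L}^{\otimes m}$ gives short exact sequences
\[
0\to \cur{N}^{j+1}\otimes\cur{L}^{\otimes m}\to \cur{N}^{j}\otimes\cur{L}^{\otimes m}\to \cur{G}_j\otimes\cur{L}^{\otimes m}\to 0 .
\]
Taking global sections yields
\[
\sum_j h^0(\cur{G}_j\otimes\cur{L}^{m}) - \sum_j h^1(\cur{N}^{j+1}\otimes\cur{L}^{m}) \le h^0(X,\cur{L}^{m}) \le \sum_j h^0(\cur{G}_j\otimes\cur{L}^{m}).
\]
By induction along the filtration, each $h^1(\cur{N}^{j+1}\otimes\cur{L}^m)$ is bounded by $\sum_{i>j} h^1(\cur{G}_i\otimes\cur{L}^m)$. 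It therefore suffices to prove two facts for every coherent sheaf $\cur{F}$ on $X_{\red}$:
\begin{enumerate}
\item[(a)] $h^0(\cur{F}\otimes\cur{L}^m)/m^d$ converges;
\item[(b)] $h^1(\cur{F}\otimes\cur{L}^m)=o(m^d)$, at least when $\cur{L}$ is big on the relevant components.
\end{enumerate}

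\textbf{The reduced case.} Here we use dévissage again. A coherent sheaf $\cur{F}$ on $X_{\red}$ admits a filtration whose quotients are, up to sheaves supported in dimension $<d$ (which contribute $O(m^{d-1})$ to every cohomology group), torsion-free sheaves on the $d$-dimensional irreducible components $X_i$. On an integral proper $X_i$, such a sheaf $\cur{F}$ of rank $r$ sits between $\cur{O}(-A)^{\oplus r}$ and $\cur{O}(A)^{\oplus r}$ for suitable Cartier data. More robustly, pass to a projective birational modification via Chow's lemma, then to a normalization or alteration, using that pullback changes $h^0$ only by $o(m^d)$. Convergence of $h^0(\cur{F}\otimes\cur{L}^m)/m^d$ then follows from the known existence of the volume on integral schemes (Okounkov bodies, or \cite[Theorem 10.7]{Cutkoskygenred}, which applies since $X_{\red}$ is generically reduced). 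Concretely, $\lim h^0(\cur{F}\otimes\cur{L}^m)/(m^d/d!)=\sum_i \operatorname{rk}_{X_i}(\cur{F})\operatorname{vol}(\cur{L}|_{X_i})$.

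\textbf{Main obstacle.} The hard step is (b): controlling the $h^1$ error terms. If $\cur{L}|_{X_i}$ is not big, then $h^0$ on $X_i$ is already $o(m^d)$, but $h^1$ need not be small. So I would first split components according to whether $\cur{L}$ is big on them. If $\cur{L}|_{X_i}$ is big, write $\cur{L}^{a}=\cur{A}\otimes\cur{O}(E)$ with $\cur{A}$ ample and $E$ effective, after pulling back to a projective model. Fujita-type vanishing then gives $h^1(\cur{F}\otimes\cur{L}^m)=o(m^d)$; this is the statement that $\widehat{h}^i$ vanishes for big line bundles, via a Kodaira-lemma argument, reduced to restrictions to $E$, which has dimension $<d$. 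If $\cur{L}|_{X_i}$ is not big, I would avoid the sequences through that component. Instead, compare $H^0(X,\cur{L}^m)$ with the subsheaf of sections vanishing on the non-big components: the kernel and image are bounded above by $h^0$ on the non-big part, which is $o(m^d)$.

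Combining the two cases gives $h^0(X,\cur{L}^m)=\sum_j h^0(\cur{G}_j\otimes\cur{L}^m)+o(m^d)$. Each term on the right converges after division by $m^d/d!$, so the volume is a limit.
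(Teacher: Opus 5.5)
\textbf{The gap is step (b), which is false, and the strategy depends on it.} It is not true that $h^1(\cur{F}\otimes\cur{L}^{\otimes m})=o(m^d)$ when $\cur{L}$ is big. The vanishing $\widehat{h}^i=0$ for $i>0$ holds for \emph{nef} line bundles (via Fujita vanishing), not for big ones. Kodaira's lemma $\cur{L}^{a}=\cur{A}\otimes\cur{O}(E)$ only yields $O(m^d)$, because the error terms live on $E$ twisted by up to $m$ multiples of $E$.

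For a concrete counterexample, let $Y$ be the blow-up of $\mathbf{P}^2$ at a point, $H$ the pullback of a line, $E$ the exceptional curve, and $L=\cur{O}_Y(H+E)$.
\begin{itemize}
\item $mE$ is the fixed part of $|mL|$, so $h^0(Y,L^{\otimes m})=\binom{m+2}{2}$.
\item Riemann--Roch gives $\chi(L^{\otimes m})=2m+1$, and $h^2=0$.
\item Hence $h^1(Y,L^{\otimes m})=m(m-1)/2$ and $\widehat{h}^1(Y,L)=1$, although $L$ is big.
\end{itemize}
On $X=Y\times_k\spec k[\epsilon]/(\epsilon^2)$, where $\cur{N}\cong\cur{O}_Y$, your two bounds only give
\[
1\le\liminf_{m\to\infty}\frac{h^0(X,\cur{L}^{\otimes m})}{m^2/2}\le\limsup_{m\to\infty}\frac{h^0(X,\cur{L}^{\otimes m})}{m^2/2}\le 2,
\]
which does not determine the limit.

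The identity you are aiming for, $h^0(X,\cur{L}^{\otimes m})=\sum_j h^0(\cur{G}_j\otimes\cur{L}^{\otimes m})+o(m^d)$, is in fact true; it follows from the paper's results. But it says the connecting maps $H^0(\cur{G}_j\otimes\cur{L}^{\otimes m})\to H^1(\cur{N}^{j+1}\otimes\cur{L}^{\otimes m})$ have rank $o(m^d)$, and that cannot be read off from the size of the target. Your handling of non-big components is fine, since there the upper bound already gives $o(m^d)$. Separately, pulling back along an alteration of degree $>1$ does not change $h^0$ by $o(m^d)$, but you do not need that, since \cite[Theorem 10.7]{Cutkoskygenred} already covers $X_{\red}$.

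\textbf{What is missing is a source of lower bounds that does not go through $h^1$.} For a \emph{fixed} coherent sheaf $\cur{G}$ on the integral scheme $X_{\red}$, the lower bound $h^0(\cur{G}\otimes\cur{L}_{\red}^{\otimes m})\ge(\rank{\cur{G}}\cdot\operatorname{vol}(\cur{L}_{\red})-o(1))\,m^d/d!$ comes from a generically isomorphic subsheaf such as $\cur{O}(-A)^{\oplus r}$. The difficulty on $X$ is that $\cur{L}^{\otimes m}$ is not of the form (fixed sheaf)$\otimes\cur{L}_{\red}^{\otimes m}$, because the nilpotent part of $\cur{L}^{\otimes m}$ varies with $m$. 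The paper controls exactly this variation.
\begin{itemize}
\item In characteristic $p$ (after passing to the perfect closure), a Frobenius power factors as $X\xrightarrow{\pi}X_{\red}\to X$. So $\cur{L}^{\otimes p^n}\cong\pi^*\cur{L}_{\red}$, and $h^0(X,\cur{L}^{\otimes qp^n+r})=h^0(X_{\red},\pi_*\cur{L}^{\otimes r}\otimes\cur{L}_{\red}^{\otimes q})$ involves only the finitely many fixed sheaves $\pi_*\cur{L}^{\otimes r}$.
\item In characteristic $0$, a finite birational modification gives a retraction $\pi$ onto the reduced subscheme (infinitesimal lifting over a smooth open subset). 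The sheaves $\pi_*(\cur{L}^{\otimes m})\otimes\cur{L}_{\red}^{\otimes -m}$ are then the fibres at $t=m$ of one flat family over $\A^1_k$, built from the cocycle $\exp(t\xi_{ij})$. Upper semicontinuity bounds $h^0(X,\cur{L}^{\otimes m})$ below by $h^0$ of the generic fibre twisted by $\cur{L}_{\red}^{\otimes m}$. That generic fibre is a fixed sheaf of full rank, whose normalized $h^0$ converges to the d\'evissage upper bound.
\end{itemize}
Your upper bound by d\'evissage is correct and is exactly what is then needed to close the argument.
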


We also prove a general formula comparing 
\[\widehat{h}^i(X, \cur{L}) \text{ and } \limsup_{m \to \infty} \frac{\dim H^i(X, \cur{F}^{\bullet} \otimes \cur{L}^{\otimes m})}{m^d/d!}\] where $\cur{F}^{\bullet}$ is a bounded complex of coherent sheaves, for a general proper $X$. When $X$ is a projective variety and $\cur{F}$ is a coherent sheaf, K{\"u}ronya showed (\cite[Proposition 2.5]{alex}) the following formula
\begin{equation}\label{basicformula}
\limsup_{m \to \infty} \frac{\dim H^i(X, \cur{F} \otimes \cur{L}^{\otimes m})}{m^d/d!} = \rank{\cur{F}} \cdot \widehat{h}^i(X, \cur{L}).
\end{equation}
When $X$ is a proper (possibly non-reduced) scheme over a field $k$, it is observed in \cite[Lemma 3.2.3]{BurgosGil2020} that
\[
\limsup_{m \to \infty} \frac{h^i(X, \cur{F} \otimes \cur{L}^{\otimes m})}{m^d/d!} \leq \rank{\cur{F}} \cdot \widehat{h}^i(X_{\red}, \cur{L}_{\red}).
\]
where $\cur{L}_{\red}$ denotes the restriction of $\cur{L}$ to $X_{\red}$. In this paper, we will prove that this is in fact an equality for any proper $X$ over an arbitrary field $k$, and generalize the formula to the case where $\cur{F}$ is an object in the bounded derived category $\cur{D}^b_{\coh}(\cur{O}_X)$ of $X$. More precisely, we will prove the following theorem

\begin{Theorem}\label{formula}
    Let $X$ be an irreducible proper scheme of dimension $d$ over a field $k$, $\cur{L}$ a line bundle on $X$, and $\cur{F}^{\bullet}$ a bounded complex of coherent $\cur{O}_X$-modules. We have
    \[
    \limsup_{m \to \infty} \frac{h^i(X, \cur{F}^{\bullet} \otimes \cur{L}^{\otimes m})}{m^d/d!} = \sum_{s+t=i} \rank{\cur{H}^t(\cur{F}}^{\bullet}) \cdot \widehat{h}^s(X_{\red}, \cur{L}_{\red}).
    \]
\end{Theorem}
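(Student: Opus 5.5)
The plan is to reduce everything to the single estimate
\[
h^i(X, \cur{G} \otimes \cur{L}^{\otimes m}) = \rank(\cur{G}) \cdot h^i(X_{\red}, \cur{L}_{\red}^{\otimes m}) + O(m^{d-1})
\]
for a coherent sheaf $\cur{G}$, with the sequence on the right the same for every $\cur{G}$. Here I take $\rank \cur{G}$ to mean the length of the stalk $\cur{G}_\eta$ over the Artinian local ring $\cur{O}_{X,\eta}$ at the generic point $\eta$. Given this estimate, dividing by $m^d/d!$ and taking limsups yields the sheaf case of Theorem~\ref{formula}.

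\textbf{Step 1 (integral case).} First let $\cur{G}$ be a coherent $\cur{O}_{X_{\red}}$-module, so $X_{\red}$ is integral of dimension $d$. I would prove the estimate for $\cur{G}$ without using projectivity, since $X$ is only proper and Serre vanishing with an ample bundle is unavailable. The argument is a dévissage using only the bound $h^i(\cur{T} \otimes \cur{L}^{\otimes m}) = O(m^{d-1})$ for coherent $\cur{T}$ supported in dimension $<d$; that bound follows by induction on the dimension of the support.
\begin{itemize}
\item Kill the torsion of $\cur{G}$; this changes $h^i$ only by $O(m^{d-1})$.
\item View the torsion-free sheaf $\cur{G}$ of rank $r$ inside $K(X_{\red})^{r}$.
\item Choose a nonzero coherent ideal $\cur{I}$ with $\cur{I}\,\cur{O}^{r} \subset \cur{G}$ and $\cur{I}\,\cur{G} \subset \cur{O}^{r}$.
\end{itemize}
Each of the inclusions $\cur{I}^{\oplus r} \subset \cur{G}$ and $\cur{I}^{\oplus r} \subset \cur{O}^{\oplus r}$ has cokernel supported in dimension $<d$. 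The long exact sequences then show that all three sheaves have the same $h^i(\,\cdot\otimes\cur{L}^{\otimes m})$ up to $O(m^{d-1})$, which gives the estimate with $r \cdot h^i(\cur{L}_{\red}^{\otimes m})$.

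\textbf{Step 2 (nilpotents).} For a general coherent $\cur{F}$ on $X$, let $\cur{N}$ be the nilradical. Filter $\cur{F}$ by $\cur{N}^j\cur{F}$. The graded pieces are $\cur{O}_{X_{\red}}$-modules whose generic ranks add up to $\length \cur{F}_\eta$. Subadditivity along the long exact sequences then gives the upper bound
\[
h^i(\cur{F}\otimes\cur{L}^{\otimes m}) \le \rank(\cur{F})\, h^i(\cur{L}_{\red}^{\otimes m}) + O(m^{d-1}),
\]
recovering \cite[Lemma 3.2.3]{BurgosGil2020}.

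The \emph{main obstacle} is the matching lower bound. The connecting homomorphisms in the long exact sequences need not vanish, and limsups are not additive. What I would try first is to build maps in both directions between $\cur{F}$ and a sheaf whose cohomology is known, with kernels and cokernels of dimension $<d$. Concretely, take a generic basis of $\cur{F}_\eta$ adapted to the filtration, i.e.\ a composition series, and realize it by a morphism $\cur{J}\otimes\cur{O}_{Z}\to\cur{F}$. Here $Z \subset X$ is a closed subscheme with $\length \cur{O}_{Z,\eta} = \rank \cur{F}$ and $\cur{J}$ is a suitable ideal; the point is to reduce to the structure sheaf of a thickening. For $\cur{O}_Z$ itself, I would compare $H^i(Z,\cur{L}^{\otimes m})$ with $H^i(Z_{\red}, \cdot)$ using multiplication by an element of $\cur{N}^{j}\setminus\cur{N}^{j+1}$ that is generically nonzero. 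Such a multiplication maps $\cur{O}_{X_{\red}}$-pieces isomorphically onto one another up to lower-dimensional error, so the connecting maps are controlled by $O(m^{d-1})$. If this fails, a fallback is to pass to a finite cover or normalization on which the filtration splits generically.

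\textbf{Step 3 (complexes).} For a bounded complex $\cur{F}^\bullet$, use the hypercohomology spectral sequence
\[
E_2^{s,t}=H^s(\cur{H}^t(\cur{F}^\bullet)\otimes\cur{L}^{\otimes m}) \Rightarrow H^{s+t}(\cur{F}^\bullet\otimes\cur{L}^{\otimes m}).
\]
This gives the upper bound $\sum_{s+t=i}\rank\cur{H}^t\cdot h^s(\cur{L}_{\red}^{\otimes m}) + O(m^{d-1})$. For the lower bound, I would run the same dévissage on the truncation triangles $\tau_{<t}\cur{F}^\bullet\to\tau_{\le t}\cur{F}^\bullet\to\cur{H}^t[-t]$. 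The task is to show that the differentials, and hence the connecting maps, are $O(m^{d-1})$, again by comparing each $\cur{H}^t$ with $\cur{O}^{\rank}$ up to lower-dimensional sheaves as in Steps~1--2. This is the same obstacle as in Step~2, now in the derived setting.
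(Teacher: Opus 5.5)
Your Step~1 and the upper bound in Step~2 are sound. Step~1 is a clean proof, on proper integral schemes, of the pointwise estimate behind \eqref{sheafperturb}. But together they only give the easy inequality \eqref{easyineq}. The reverse inequality, where all the work lies, is left as a plan, and the mechanism you propose cannot work.

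The connecting maps of $0\to\cur{N}\cur{L}^{\otimes m}\to\cur{L}^{\otimes m}\to\cur{L}_{\red}^{\otimes m}\to 0$ depend on how $\cur{L}$ itself sits over $\cur{L}_{\red}$, not just on $\cur{O}_X$. Take the trivial thickening $X=X_{\red}\times_k\spec k[\epsilon]/(\epsilon^2)$ with its retraction $\pi$. A line bundle with $\cur{L}_{\red}=\cur{L}_0$ is $\pi^*\cur{L}_0$ twisted by some $\xi\in H^1(X_{\red},\cur{O}_{X_{\red}})$, and $\cur{L}^{\otimes m}$ corresponds to $m\xi$. The connecting map $H^s(\cur{L}_0^{\otimes m})\to H^{s+1}(\cur{L}_0^{\otimes m})$ is (up to sign) cup product with $m\xi$. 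So $h^i(X,\cur{L}^{\otimes m})=2h^i(\cur{L}_0^{\otimes m})-r_{i-1}(m)-r_i(m)$, where $r_s(m)$ is the rank of $m\xi\cup$ on $H^s(\cur{L}_0^{\otimes m})$, and the lower bound is exactly the statement $r_s(m)=O(m^{d-1})$.

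Multiplication by $\epsilon$ is a global isomorphism $\cur{O}_{X_{\red}}\to\epsilon\cur{O}_X$, which is precisely your situation. But it is the same map for every $\xi$, so it cannot control $r_s(m)$. Reducing to $\cur{O}_Z$ for a thickening $Z$ gains nothing, since $\cur{F}=\cur{O}_X$ already carries the full difficulty. Your fallback fails for the same reason: here a retraction exists, yet $\cur{L}\not\cong\pi^*\cur{L}_{\red}$.

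What is missing is a way, uniform in $m$, to express $h^i(X,\cur{F}^\bullet\otimes\cur{L}^{\otimes m})$ as $h^i$ on $X_{\red}$ of a \emph{fixed} object twisted by powers of $\cur{L}_{\red}$, so that the integral case applies with an error independent of $m$. In the example this works by applying Step~1 to the single rank-$2$ extension $\cur{V}_\xi$ of $\cur{O}_{X_{\red}}$ by itself with class $\xi$, since $r_s(m)$ is at most the rank of $\xi\cup$ on $H^s(\cur{L}_0^{\otimes m})$. For general thickenings, $\pi_*(\cur{L}^{\otimes m})\otimes\cur{L}_{\red}^{\otimes -m}$ genuinely varies with $m$ (its cocycle is $\exp(m\xi_{ij})$). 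The paper handles this as follows.
\begin{itemize}
\item In characteristic $0$, it passes to the finite birational $X'\to X$ admitting a retraction (Lemma~\ref{tosplit}). It then uses $\exp$ to put all the discrepancies into one line bundle $\cur{M}$ on $X\times\A^1_k$, and uses upper semicontinuity to compare each $m$ with the single complex $\cur{E}^\bullet_\eta$ on the generic fiber.
\item In characteristic $p$, it passes to the perfect closure and uses $F^n=i\circ\pi$. This gives $\cur{L}^{\otimes p^n}\cong\pi^*\cur{L}_{\red}$, so only the $p^n$ fixed complexes $\pi_*(\cur{F}^\bullet\otimes\cur{L}^{\otimes r})$ occur.
\end{itemize}

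Your Step~3 has a similar gap already on integral $X$. The spectral-sequence differentials are governed by the extension classes of $\cur{F}^\bullet$, not by the individual $\cur{H}^t$, so comparing each $\cur{H}^t$ with $\cur{O}^{\rank}$ does not touch them. The paper spreads out the generic splitting of $\cur{F}^\bullet$ (Lemma~\ref{extendfrompoint}). It then uses Deligne's formula to obtain a global map $\cur{I}^n\cur{F}^\bullet\to\bigoplus_t\cur{H}^t(\cur{F}^\bullet)[-t]$ with lower-dimensional cone.
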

We will also prove a formula (Theorem \ref{reducibleformula}) when $X$ is possibly reducible, and therefore completing the picture for any proper $X$ over a field.

This paper is organized as follows. Sections \ref{section2} and \ref{section3} are devoted to generalizing the formula \eqref{basicformula} to the case of a proper scheme $X$ and $\cur{F}^{\bullet} \in \cur{D}^b_{\coh}(\cur{O}_X)$. In Section \ref{section2} we generalize formula \eqref{basicformula} from a single coherent sheaf to a bounded complex of coherent sheaves, and in Section \ref{section3} we remove the assumptions that $X$ is reduced and irreducible, treating the characteristic $0$ and characteristic $p$ cases separately. In Section \ref{section4}, we use the methods developed in previous sections to solve the ``limsup versus limit'' problem for volumes of line bundles, and reduce this problem for higher cohomological functions to the reduced case.

\section{Perturbation by a Bounded Complex}\label{section2}

Let $X$ be a proper variety of dimension $d$ over a field $k$. Let $\cur{L}$ be a line bundle on $X$. For $0 \leq i \leq d$, the quantity
\[
\widehat{h}^i(X, \cur{L}) = \limsup_{m \to \infty} \frac{\dim H^i(X, \cur{L}^{\otimes m})}{m^d/d!}
\]
has been studied in \cite{alex}. The same paper proves that if $\cur{F}$ is a coherent sheaf on $X$, then
\begin{equation}\label{sheafperturb}
\limsup_{m \to \infty} \frac{\dim H^i(X, \cur{F} \otimes \cur{L}^{\otimes m})}{m^d/d!} = \rank{\cur{F}} \cdot \widehat{h}^i(X, \cur{L}) 
\end{equation}

Our goal in this section is to generalize this formula to the case where $\cur{F}^{\bullet}$ is a bounded complex of coherent sheaves. We will write $h^i(-)$ for $\dim H^i(-)$.

\begin{Lemma}\label{suppbd}
    Let $X$ be a proper scheme over a field $k$. If $\cur{F}^{\bullet}$ is in $\cur{D}_{\coh}^b(\cur{O}_X)$ and $\dim \supp F^{\bullet} \leq d$, then
    \[
    h^i(X, \cur{F}^{\bullet} \otimes \cur{L}^{\otimes m}) = O(m^d).
    \] 
\end{Lemma}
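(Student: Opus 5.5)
We reduce in two steps: first from a complex to a single coherent sheaf, then from a sheaf to the classical estimate for sheaves on a projective scheme. For the first step we use the hypercohomology spectral sequence
\[
E_2^{s,t} = H^s(X, \cur{H}^t(\cur{F}^{\bullet}) \otimes \cur{L}^{\otimes m}) \Rightarrow H^{s+t}(X, \cur{F}^{\bullet} \otimes \cur{L}^{\otimes m}).
\]
This is legitimate since $\cur{L}$ is locally free, so $\cur{H}^t(\cur{F}^{\bullet}\otimes \cur{L}^{\otimes m}) = \cur{H}^t(\cur{F}^{\bullet})\otimes\cur{L}^{\otimes m}$. Alternatively, one can induct on the number of nonzero cohomology sheaves using the truncation triangles $\tau_{\leq t-1}\cur{F}^{\bullet} \to \tau_{\leq t}\cur{F}^{\bullet} \to \cur{H}^t(\cur{F}^{\bullet})[-t] \to$. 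Either way we obtain
\[
h^i(X, \cur{F}^{\bullet}\otimes\cur{L}^{\otimes m}) \leq \sum_{s+t=i} h^s(X, \cur{H}^t(\cur{F}^{\bullet})\otimes\cur{L}^{\otimes m}).
\]
This is a finite sum, independent of $m$, because $\cur{F}^{\bullet}$ is bounded and $H^s$ vanishes outside $0\le s\le \dim X$. Each $\cur{H}^t(\cur{F}^{\bullet})$ is coherent and supported in $\supp \cur{F}^{\bullet}$, which has dimension $\le d$. It therefore suffices to prove the statement for a single coherent sheaf $\cur{F}$ with $\dim\supp\cur{F}\le d$.

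For a coherent sheaf $\cur{F}$, choose a closed subscheme structure $i\colon Z\hookrightarrow X$ on $\supp\cur{F}$ with $\cur{F}=i_*\cur{G}$. For instance, take $Z$ cut out by the annihilator ideal of $\cur{F}$; then $\dim Z\le d$. By the projection formula, $H^s(X,\cur{F}\otimes\cur{L}^{\otimes m}) = H^s(Z,\cur{G}\otimes \cur{L}|_Z^{\otimes m})$. So we may assume $\dim X\le d$ and $\cur{F}$ arbitrary coherent. We then argue by noetherian induction on the support, equivalently by induction on $d$, using d\'evissage.

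If $X$ is projective, write $\cur{L} = \cur{A}\otimes\cur{B}^{-1}$ with $\cur{A},\cur{B}$ very ample. We choose a section of $\cur{B}$ (after enlarging the field if necessary, which does not change dimensions) not vanishing on any associated point of $\cur{F}$. This gives exact sequences $0\to \cur{F}\otimes\cur{L}^{m}\to \cur{F}\otimes\cur{L}^{m}\otimes\cur{B}\to (\text{sheaf on a divisor})\to 0$. Combined with Serre vanishing and the induction hypothesis, these give the standard bound $h^s = O(m^d)$; this is the classical argument as in Lazarsfeld's Example 1.2.33, or we can simply quote it for projective schemes.

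For general proper $X$, the step I expect to be the main obstacle is passing beyond the projective case. Here we use Chow's lemma to get a projective birational $\pi\colon X'\to X$ with $X'$ projective and $\dim X' = \dim X$. By d\'evissage (noetherian induction on $\supp\cur{F}$, with the long exact sequence and the inductive hypothesis on lower-dimensional supports), it suffices to treat $\cur{F} = \pi_*\cur{G}$ up to sheaves of smaller-dimensional support. Concretely, there is a map $\cur{F}\to \pi_*\pi^*\cur{F}$ (or one can use $R\pi_*$) whose kernel and cokernel are supported on the locus where $\pi$ is not an isomorphism, which has dimension $<d$. Then we use the Leray spectral sequence with the projection formula, $R^q\pi_*(\pi^*\cur{F}\otimes\pi^*\cur{L}^{m}) = R^q\pi_*\pi^*\cur{F}\otimes\cur{L}^m$. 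For $q>0$ these sheaves have support of dimension $<d$, so they are handled by induction. The term $H^s(X',\pi^*\cur{F}\otimes\pi^*\cur{L}^{m})$ is $O(m^d)$ by the projective case. Chasing these finitely many bounded terms yields $h^i(X,\cur{F}\otimes\cur{L}^{\otimes m})=O(m^d)$, and the reduction of the first paragraph finishes the proof.
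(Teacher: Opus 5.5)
Your proposal is correct and follows the paper's proof: the hypercohomology spectral sequence gives $h^i(X,\cur{F}^{\bullet}\otimes\cur{L}^{\otimes m})\le\sum_{s+t=i}h^s(X,\cur{H}^t(\cur{F}^{\bullet})\otimes\cur{L}^{\otimes m})$, a finite sum whose terms are each $O(m^d)$ by the single-sheaf estimate. The only difference is that the paper just cites Lazarsfeld for the sheaf case, while you also give a sound reduction from proper to projective $X$ (via Chow's lemma, the Leray spectral sequence, and induction on $\dim\supp$) and then quote the projective case; your one-line sketch of that projective case would need a bound uniform in two twisting parameters to close, but quoting it, as the paper does, is enough.
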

\begin{proof}
    This is well-known when $\cur{F}^{\bullet}$ is a single sheaf (see \cite[Example 1.2.33]{lazarsfeld2004positivity}). For complexes, we have the spectral sequence
    \begin{equation}\label{hypcohseq}
    E_2^{s,t} = H^s(X, \cur{H}^t(\cur{F}^{\bullet})) \Rightarrow H^{s+t}(X, \cur{F}^{\bullet})
    \end{equation}
    After tensoring by $\cur{L}^{\otimes m}$, it follows immediately that 
    \[
    h^i(X, \cur{F}^{\bullet} \otimes \cur{L}^{\otimes m}) \leq \sum_{s+t = i} h^s(X, \cur{H}^t(\cur{F}^{\bullet}) \otimes \cur{L}^{\otimes m}).
    \]
    Note that the sum is finite by the boundedness of $\cur{F}^{\bullet}$. Each term in the sum is $O(m^d)$, so the lemma follows.
\end{proof}

\begin{Lemma}\label{suppdiff}
    Let $X$ be a proper scheme over $k$ of dimension $d$, $\cur{L}$ a line bundle on $X$, and
    \[
    \cur{A}^{\bullet} \to \cur{B}^{\bullet} \to \cur{C}^{\bullet} \to \cur{A}^{\bullet}[1]
    \]
    a distinguished triangle in $\cur{D}^b_{\coh}(\cur{O}_X)$. If $\dim \supp \cur{A}^{\bullet} \leq d - 1$, then 
    \[
    \limsup_{m \to \infty} \frac{h^i(X, \cur{B}^{\bullet} \otimes \cur{L}^{\otimes m})}{m^d/d!} = \limsup_{m \to \infty} \frac{h^i(X, \cur{C}^{\bullet} \otimes \cur{L}^{\otimes m})}{m^d/d!}. 
    \]
\end{Lemma}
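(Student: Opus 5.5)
The plan is to tensor the distinguished triangle with $\cur{L}^{\otimes m}$ and compare hypercohomology through the long exact sequence, using Lemma \ref{suppbd} to show that the contribution of $\cur{A}^{\bullet}$ is negligible at the scale $m^d$.

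First, tensoring with a line bundle is exact, so for every $m$
\[
\cur{A}^{\bullet} \otimes \cur{L}^{\otimes m} \to \cur{B}^{\bullet} \otimes \cur{L}^{\otimes m} \to \cur{C}^{\bullet} \otimes \cur{L}^{\otimes m} \to \cur{A}^{\bullet} \otimes \cur{L}^{\otimes m}[1]
\]
is again a distinguished triangle in $\cur{D}^b_{\coh}(\cur{O}_X)$. Moreover, the support of $\cur{A}^{\bullet}\otimes\cur{L}^{\otimes m}$ equals that of $\cur{A}^{\bullet}$, since locally this is just a twist by a free module of rank one.

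Applying $R\Gamma(X,-)$ to this triangle gives a long exact sequence
\[
H^i(X, \cur{A}^{\bullet} \otimes \cur{L}^{\otimes m}) \to H^i(X, \cur{B}^{\bullet} \otimes \cur{L}^{\otimes m}) \to H^i(X, \cur{C}^{\bullet} \otimes \cur{L}^{\otimes m}) \to H^{i+1}(X, \cur{A}^{\bullet} \otimes \cur{L}^{\otimes m}).
\]
All these groups are finite-dimensional because $X$ is proper and the complexes are bounded with coherent cohomology. Exactness then yields the two-sided bound
\[
\bigl| h^i(X, \cur{B}^{\bullet} \otimes \cur{L}^{\otimes m}) - h^i(X, \cur{C}^{\bullet} \otimes \cur{L}^{\otimes m}) \bigr| \leq h^i(X, \cur{A}^{\bullet} \otimes \cur{L}^{\otimes m}) + h^{i+1}(X, \cur{A}^{\bullet} \otimes \cur{L}^{\otimes m}).
\]

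Since $\dim \supp \cur{A}^{\bullet} \leq d-1$, Lemma \ref{suppbd} (applied with $d-1$ in place of $d$) shows that the right-hand side is $O(m^{d-1})$. Dividing by $m^d/d!$, the difference of the two normalized sequences tends to $0$. Two real sequences whose difference tends to zero have the same $\limsup$, including the case where both are $+\infty$; Lemma \ref{suppbd} in fact shows the values are finite. This gives the claimed equality.

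The only point requiring care is the application of Lemma \ref{suppbd}. Its hypothesis concerns the support of the complex itself, so I need that the support of $\cur{A}^{\bullet}$, meaning the union of the supports of its cohomology sheaves, has dimension at most $d-1$. I also need that the lemma's bound holds for every cohomological degree, in particular for degree $i+1$. Both follow directly from the statement and proof of Lemma \ref{suppbd}, so I do not expect a genuine obstacle.
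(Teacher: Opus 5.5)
Your proposal is correct and follows the paper's proof essentially step for step. You tensor the triangle with $\cur{L}^{\otimes m}$, use the long exact sequence to bound the difference of $h^i$ for $\cur{B}^{\bullet}$ and $\cur{C}^{\bullet}$ by $h^i + h^{i+1}$ of $\cur{A}^{\bullet}\otimes\cur{L}^{\otimes m}$, and invoke Lemma \ref{suppbd} to get an $O(m^{d-1})$ error, which vanishes after normalizing by $m^d/d!$.
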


\begin{proof}
    Tensoring the given distinguished triangle by $\cur{L}^{\otimes m}$ produces a distinguished triangle
    \[
    \cur{A}^{\bullet} \otimes \cur{L}^{\otimes m} \to \cur{B}^{\bullet} \otimes \cur{L}^{\otimes m} \to \cur{C}^{\bullet} \otimes \cur{L}^{\otimes m} \to \cur{A}^{\bullet} \otimes \cur{L}^{\otimes m}[1].
    \]
    We have the long exact sequence
    \[
    \cdots \to H^i(\cur{A}^{\bullet} \otimes \cur{L}^{\otimes m}) \to H^i(\cur{B}^{\bullet} \otimes \cur{L}^{\otimes m}) \to H^i(\cur{C}^{\bullet} \otimes \cur{L}^{\otimes m}) \to 
    H^{i+1}(\cur{A}^{\bullet} \otimes \cur{L}^{\otimes m}) \to \cdots
    \]
    which implies
    \[
    \left|h^i(\cur{B}^{\bullet} \otimes \cur{L}^{\otimes m}) - h^i(\cur{C}^{\bullet} \otimes \cur{L}^{\otimes m})\right| \leq h^i(\cur{A}^{\bullet} \otimes \cur{L}^{\otimes m}) + h^{i+1}(\cur{A}^{\bullet} \otimes \cur{L}^{\otimes m}).
    \]
    By Lemma \ref{suppbd}, the right side is $O(m^{d-1})$, so after dividing by $m^n$ and taking $\limsup$, we obtain the desired equality. 
\end{proof}

\begin{Lemma}\label{extendfrompoint}
    Let $X$ be a noetherian scheme. Let $K, L \in \cur{D}^b_{\coh}(\cur{O}_X)$ and let $x$ be a point. If 
    \[
    K_x \cong L_x \text{ in } \cur{D}^b_{\coh}(\cur{O}_{X, x}),
    \]
    then there exists an open neighborhood $U$ of $x$ such that 
    \[
    K|_U \cong L|_U \text{ in } \cur{D}^b_{\coh}(\cur{O}_{U}).
    \]
\end{Lemma}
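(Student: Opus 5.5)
We reduce to the affine case and then spread out morphisms from the stalk. Since the question is local around $x$, we first replace $X$ by an affine open neighborhood $\operatorname{Spec} A$ of $x$ with $A$ noetherian. On a noetherian affine scheme, $\cur{D}^b_{\coh}(\cur{O}_X)$ is equivalent to $D^b_{f}(A)$, the bounded derived category of $A$-modules with finitely generated cohomology. Under this equivalence, the stalk at $x = \mathfrak{p}$ is localization $-\otimes_A A_{\mathfrak{p}}$, which is exact. So we may assume $K$ and $L$ are bounded above complexes of finitely generated free $A$-modules with bounded, finitely generated cohomology. We can take $K$ to be such a complex $P^{\bullet}$, obtained as a free resolution of a bounded complex.

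Next we lift the isomorphism. Let $\phi \colon K_{\mathfrak{p}} \to L_{\mathfrak{p}}$ be the given isomorphism in $D(A_{\mathfrak{p}})$. The key fact is
\[
\operatorname{Hom}_{D(A_{\mathfrak{p}})}(K_{\mathfrak{p}}, L_{\mathfrak{p}}) \cong \operatorname{Hom}_{D(A)}(K, L)_{\mathfrak{p}},
\]
that is, $\operatorname{Hom}$ commutes with localization for $K \in D^-_{f}(A)$ and $L \in D^+(A)$. Here is how we plan to prove it:
\begin{enumerate}
\item Since $L$ is bounded, any map from $P^{\bullet}$ to $L$ factors through a brutal truncation $\sigma_{\geq -N}P^{\bullet}$ for $N \gg 0$. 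This is the place where boundedness is needed.
\item The truncation $\sigma_{\geq -N}P^{\bullet}$ is a bounded complex of finite free modules.
\item For such a bounded complex, $\operatorname{Hom}_{D(A)}(\sigma_{\geq -N}P^{\bullet}, L)$ is $H^0$ of the Hom complex $\operatorname{Hom}^{\bullet}(\sigma_{\geq -N}P^{\bullet}, L)$, after replacing $L$ by a bounded below injective resolution if needed.
\item Forming this Hom complex commutes with the flat base change $A \to A_{\mathfrak{p}}$, because its terms are finite free modules. Taking $H^0$ also commutes with flat base change.
\end{enumerate}
Given the key fact, $\phi = f/s$ for some $f \colon K \to L$ in $D(A)$ and $s \notin \mathfrak{p}$. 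Since $s$ is a unit locally, we may take $\phi = f_{\mathfrak{p}}$ after shrinking to $D(s)$.

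Finally we spread out the isomorphism. Complete $f$ to a distinguished triangle $K \to L \to C \to K[1]$. Then $C \in \cur{D}^b_{\coh}$, and $C_{\mathfrak{p}} \cong 0$ because $f_{\mathfrak{p}}$ is an isomorphism and localization is exact. The cohomology sheaves $\cur{H}^j(C)$ are coherent and nonzero for only finitely many $j$, and each has stalk zero at $x$. Hence each $\cur{H}^j(C)$ vanishes on an open neighborhood of $x$, because the support of a coherent sheaf is closed. Intersecting these finitely many neighborhoods gives $U$ with $C|_U \cong 0$, so $f|_U$ is an isomorphism $K|_U \cong L|_U$.

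The main obstacle is the Hom–localization compatibility in the second step, in particular the truncation argument and justifying the derived Hom computation. Alternatively, one could invoke that $R\operatorname{Hom}(K,L)$ has coherent cohomology and $\operatorname{Ext}^0$ localizes; this follows from the same bounded-truncation argument, or one can cite it as standard from the Stacks Project.
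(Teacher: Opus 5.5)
Your argument is correct and has the same overall structure as the paper's. Both work on an affine $\spec A$ and spread a morphism out from $A_{\fk{p}}$ to a principal open. Both then shrink further using that the cone has only finitely many coherent cohomology sheaves, whose supports are closed and miss $x$.

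The difference is in the spreading-out step. The paper represents $K$ and $L$ by bounded complexes $M^{\bullet}, N^{\bullet}$ of finitely generated modules. It then \emph{assumes} the stalk isomorphism is an actual quasi-isomorphism $M^{\bullet}_{\fk{p}} \to N^{\bullet}_{\fk{p}}$ and clears denominators on generators. That is quick but not automatic. A morphism in $D(A_{\fk{p}})$ is in general only a roof, and need not be a chain map between the chosen bounded representatives in either direction. Your identity $\Hom_{D(A)}(K,L)_{\fk{p}} \cong \Hom_{D(A_{\fk{p}})}(K_{\fk{p}},L_{\fk{p}})$ is exactly what justifies that step. It is the $H^0$ of the standard isomorphism $R\Hom_A(K,L)\otimes_A A_{\fk{p}} \cong R\Hom_{A_{\fk{p}}}(K_{\fk{p}},L_{\fk{p}})$ for pseudo-coherent $K$ and bounded below $L$. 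With it the lifting becomes formal ($\phi = f/s$), and there is no chain-level bookkeeping about relations and differentials. The paper's route is shorter; yours actually closes that step.

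Two slips in your sketch of that identity need fixing.
\begin{itemize}
\item $\sigma_{\geq -N}P^{\bullet}$ is a \emph{sub}complex of $P^{\bullet}$, so maps $P^{\bullet}\to L$ do not factor through it. What is true is that the restriction map $\Hom_{D(A)}(P^{\bullet},L)\to\Hom_{D(A)}(\sigma_{\geq -N}P^{\bullet},L)$ is bijective when $L\in D^{\geq a}$ and $-N<a$. This follows from the triangle $\sigma_{\geq -N}P^{\bullet}\to P^{\bullet}\to\sigma_{\leq -N-1}P^{\bullet}\to$, since a bounded above complex of projectives concentrated in degrees $\leq -N$ admits no nonzero maps to $L$. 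The same holds over $A_{\fk{p}}$ because $(\sigma_{\geq -N}P^{\bullet})_{\fk{p}}=\sigma_{\geq -N}(P^{\bullet}_{\fk{p}})$, which gives the compatibility of the two sides.
\item The terms of $\Hom^{\bullet}(\sigma_{\geq -N}P^{\bullet},L)$ are finite sums of modules $\Hom_A(P^p,L^q)$, not free modules. They commute with localization because each $P^p$ is finite free. Do not replace $L$ by an injective resolution: it is unnecessary, since $\sigma_{\geq -N}P^{\bullet}$ is a bounded complex of projectives.
\end{itemize}
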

\begin{proof}
    Let $U=\spec A$ be an affine open neighborhood of $x$, corresponding to a prime $\fk{p} \subset A$. Then $K|_U$ and $L|_U$ can be represented by bounded complexes $M^{\bullet}$ and $N^{\bullet}$ of finitely generated $A$-modules. We may assume there is a quasi-isomorphism of bounded complexes of $A_{\fk{p}}$-modules
    \[
    \phi_{\fk{p}}: M_{\fk{p}}^{\bullet} \to N_{\fk{p}}^{\bullet}.
    \]
    By assumption $M_{\fk{p}}^{\bullet}$ and $N_{\fk{p}}^{\bullet}$ have finitely many terms, and each term is finitely generated, so we may choose $f \notin \fk{p}$ so that we can define $\phi_f: M_f^{\bullet} \to N_f^{\bullet}$ by the same formula as $\phi_{\fk{p}}$ on generators. Then $\phi_f$ represents a morphism $g_V: K|_V \to L|_V$ where $V = \spec A_f$. The cone of $g_V$ is an object in $\cur{D}^b_{\coh}(\cur{O}_V)$, so its support is a closed subset $Z$ of $V$, but it does not contain $x$ by construction. Hence $g_V$ is an isomorphism on $V - Z$, which is open in $X$. 
\end{proof}

\begin{Theorem}\label{redformula}
    Let $X$ be a proper variety over $k$ of dimension $n$, $\cur{L}$ a line bundle on $X$, and $\cur{F}^{\bullet}$ a bounded complex of coherent $\cur{O}_X$-modules. We have 
    \[
    \limsup_{m \to \infty} \frac{\dim H^i(X, \cur{F}^{\bullet} \otimes \cur{L}^{\otimes m})}{m^n/n!} = \sum_{s+t = i} \rank{\cur{H}^t(\cur{F}^{\bullet})} \cdot \widehat{h}^s(X, \cur{L}).
    \]
\end{Theorem}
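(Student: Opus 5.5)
The plan is to reduce, via Lemma \ref{extendfrompoint} and Lemma \ref{suppdiff}, to the case where $\cur{F}^{\bullet}$ is quasi-isomorphic to a direct sum of shifted trivial bundles, i.e.\ to $\bigoplus_t \cur{O}_X^{\oplus r_t}[-t]$ with $r_t = \rank \cur{H}^t(\cur{F}^{\bullet})$.

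\textbf{Step 1: the generic stalk.} Let $\eta$ be the generic point of $X$. Since $X$ is a variety, $\cur{O}_{X,\eta} = K(X)$ is a field. Every bounded complex over a field is quasi-isomorphic to the direct sum of its cohomology groups placed in the appropriate degrees. Hence
\[
\cur{F}^{\bullet}_\eta \cong \bigoplus_t K(X)^{\oplus r_t}[-t] = \Big(\bigoplus_t \cur{O}_X^{\oplus r_t}[-t]\Big)_\eta .
\]

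\textbf{Step 2: spreading out.} Put $\cur{G}^{\bullet} = \bigoplus_t \cur{O}_X^{\oplus r_t}[-t]$. By Lemma \ref{extendfrompoint} there is an open $U \ni \eta$ with $\cur{F}^{\bullet}|_U \cong \cur{G}^{\bullet}|_U$. We cannot compare the two complexes directly, because this isomorphism need not extend to a morphism on $X$. Instead we build a roof with a cone supported on a proper closed subset.

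\begin{itemize}
\item Pick an effective Cartier divisor, or more generally a coherent ideal sheaf $\cur{I}$ cosupported on $Z = X \setminus U$.
\item Consider $\cur{I}^N \otimes \cur{G}^{\bullet}$, which for a trivial bundle is just $(\cur{I}^N)^{\oplus r_t}[-t]$.
\item A standard extension argument (Deligne's formula $\operatorname{Hom}_U(\cur{G}|_U,\cur{F}|_U) = \varinjlim_N \operatorname{Hom}_X(\cur{I}^N\cur{G},\cur{F})$, valid on a noetherian scheme for $\cur{G}$ a sum of shifted free sheaves) produces, for $N \gg 0$, a morphism $\cur{I}^N \cur{G}^{\bullet} \to \cur{F}^{\bullet}$ in $\cur{D}^b_{\coh}(\cur{O}_X)$ restricting to the given isomorphism on $U$.
\end{itemize}

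The cone of this morphism is supported on $Z$, so it has dimension $\le n-1$. Lemma \ref{suppdiff} then gives equal limsups for $\cur{F}^{\bullet}$ and $\cur{I}^N\cur{G}^{\bullet}$. Likewise the short exact sequence $0 \to \cur{I}^N \to \cur{O}_X \to \cur{O}_X/\cur{I}^N \to 0$ has third term supported on $Z$, so a second application of Lemma \ref{suppdiff} lets us replace $\cur{I}^N\cur{G}^{\bullet}$ by $\cur{G}^{\bullet}$.

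\textbf{Step 3: computation.} For $\cur{G}^{\bullet}$ we have
\[
h^i(X,\cur{G}^{\bullet}\otimes\cur{L}^{\otimes m}) = \sum_{s+t=i} r_t\, h^s(X,\cur{L}^{\otimes m}).
\]
We want the limsup of this sum to equal $\sum_{s+t=i} r_t \widehat{h}^s(X,\cur{L})$. However, a limsup of a sum is only bounded above by the sum of the limsups. This is where I expect the main obstacle.

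\textbf{Main obstacle.} If $i = 0$ or only one $t$ has $r_t \neq 0$, the sum has a single term and the formula is immediate. In general the different $h^s$ may peak along different subsequences, so the inequality $\le$ is automatic but equality needs an argument.

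One natural approach is to lean on properties of $\widehat{h}^s$ from \cite{alex}, but those do not obviously yield the required simultaneity. I would therefore first re-examine whether the intended statement involves a single nonzero $\cur{H}^t$ contributing per degree, or whether the formula should be read as holding under that condition. Failing that, I would try to show that for $\cur{L}$ the sequences $h^s(X,\cur{L}^{\otimes m})/m^n$ genuinely converge along a common subsequence realizing all the limsups simultaneously. The most plausible source of such a subsequence is the divisibility and homogeneity of $\widehat{h}^s$ under $m \mapsto km$.
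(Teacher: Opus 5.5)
You have not proved the theorem: Step 3 leaves the reverse inequality open.

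Your Steps 1--2 are essentially the paper's reduction: generic splitting over $K(X)$, Lemma \ref{extendfrompoint}, Deligne's formula, and Lemma \ref{suppdiff}. Splitting onto $\bigoplus_t \cur{O}_X^{\oplus r_t}[-t]$ rather than onto $\bigoplus_t \cur{H}^t(\cur{F}^{\bullet})[-t]$, as the paper does, is fine. It even makes the inequality $\leq$ immediate without \eqref{sheafperturb}. The inequality Step 3 leaves open is
\[
\limsup_{m \to \infty} \frac{\sum_{s+t=i} r_t\, h^s(X, \cur{L}^{\otimes m})}{m^n/n!} \geq \sum_{s+t=i} r_t\, \widehat{h}^s(X, \cur{L}),
\]
and it is open as soon as two different $s$ with $r_{i-s} \neq 0$ occur. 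That is exactly where the statement goes beyond \eqref{sheafperturb}. Note also that $i=0$ is not automatically a single term. For a bounded complex $t$ may be negative, so $h^0(X, \cur{G}^{\bullet} \otimes \cur{L}^{\otimes m})$ also contains $r_{-1} h^1(X, \cur{L}^{\otimes m})$, $r_{-2} h^2(X, \cur{L}^{\otimes m})$, and so on.

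Your suggested remedy via homogeneity cannot close this gap. Homogeneity $\widehat{h}^s(X, \cur{L}^{\otimes k}) = k^n \widehat{h}^s(X, \cur{L})$ is shared by the pair of sequences $1 \pm \sin(\log m)$, whose sum has limsup $2$ rather than $4$. So is the stronger fact, from \eqref{sheafperturb} applied to $\cur{L}^{\otimes r}$ and $\cur{L}^{\otimes k}$, that every residue class of $m$ modulo $k$ realizes the full limsup. That the different $h^s$ peak simultaneously is genuinely additional information.

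The paper's proof has the same gap. It reduces to the split complex $\bigoplus_t \cur{H}^t(\cur{F}^{\bullet})[-t]$ and then asserts that, because the differentials of \eqref{hypcohseq} vanish, ``the desired equality follows''. Vanishing only gives $h^i(X, \cur{F}^{\bullet} \otimes \cur{L}^{\otimes m}) = \sum_{s+t=i} h^s(X, \cur{H}^t(\cur{F}^{\bullet}) \otimes \cur{L}^{\otimes m})$ for each $m$. Combined with \eqref{sheafperturb}, this yields $\leq$ again, not equality. So your suspicion is well founded, and the paper does not supply the missing idea.

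What the argument does prove is the following.
\begin{itemize}
\item Equality holds whenever all but at most one of the normalized sequences $h^s(X, \cur{L}^{\otimes m})/(m^n/n!)$ with $r_{i-s} \neq 0$ converge. This covers the case where all $\widehat{h}^s(X, \cur{L})$ are limits, and the case where only one $\cur{H}^t(\cur{F}^{\bullet})$ has positive rank.
\item Equality also holds whenever $n \leq 3$. The normalized $h^0$ (the volume) converges, and so does the normalized $h^n$, which by duality is $h^0(X, \omega_X \otimes \cur{L}^{\otimes -m})$. Moreover $\chi(X, \cur{L}^{\otimes m})$ is a polynomial in $m$. So for $n \leq 2$ everything converges, and for $n = 3$ the normalized difference $h^1 - h^2$ converges, which forces $h^1$ and $h^2$ to peak together.
\end{itemize}
Already for $n = 4$, $\cur{F}^{\bullet} = \cur{O}_X \oplus \cur{O}_X[-2]$ and $i = 3$, one needs
\[
\limsup_m \bigl(h^1(X, \cur{L}^{\otimes m}) + h^3(X, \cur{L}^{\otimes m})\bigr)/(m^4/4!) = \widehat{h}^1(X, \cur{L}) + \widehat{h}^3(X, \cur{L}).
\]
Neither your argument nor the paper's establishes this. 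It would follow from convergence of the $\widehat{h}^s$, which is open.
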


\begin{proof}
    Using the spectral sequence \eqref{hypcohseq} and tensoring by $\cur{L}^{\otimes m}$, it follows immediately that 
    \[
    h^i(X, \cur{F}^{\bullet} \otimes \cur{L}^{\otimes m}) \leq \sum_{s+t = i} h^s(X, \cur{H}^t(\cur{F}^{\bullet}) \otimes \cur{L}^{\otimes m}).
    \]
    Note that the sum is finite by the boundedness of $\cur{F}^{\bullet}$. For simplicity set 
    \[
    \alpha_i = \limsup_{m \to \infty} \frac{\dim H^i(X, \cur{F}^{\bullet} \otimes \cur{L}^{\otimes m})}{m^n/n!}.
    \]
    Using \eqref{sheafperturb}, we see that
    \[
    \alpha_i \leq \sum_{s+t = i} \rank{\cur{H}^t(\cur{F}^{\bullet})} \cdot \widehat{h}^s(X, \cur{L}).
    \]

    We need to prove the reverse inequality. First notice that if $\cur{F}^{\bullet}$ is of the form
    \[
    \cur{F}^{\bullet} \cong \bigoplus \cur{H}^i(\cur{F}^{\bullet})[-i]
    \]
    then all differentials in the spectral sequence \eqref{hypcohseq} vanish, so the desired equality follows. 
    
    To deal with the general case, let $\eta$ be the generic point of $X$. Then $\cur{F}^{\bullet}_{\eta}$ is a complex of $K(X)$-vector spaces, so 
    \[
    \cur{F}^{\bullet}_{\eta} \cong \bigoplus_i \cur{H}^i(\cur{F}^{\bullet})_{\eta}[-i] \text{ in } \cur{D}^b(K(X)).
    \]
    Since $\cur{F}^{\bullet}$ is in $\cur{D}^b_{\coh}(\cur{O}_X)$, by Lemma \ref{extendfrompoint}, there exists a dense open set $U$ such that 
    \begin{equation}\label{splitiso}
    \cur{F}^{\bullet}|_U \cong \bigoplus_i \cur{H}^i(\cur{F}^{\bullet})|_U[-i] \text{ in } \cur{D}_{\coh}^b(\cur{O}_U).
    \end{equation}
    Let $\cur{I}$ be an ideal sheaf of $X - U$, which is a closed subset of lower dimension. Via Deligne's formula (\cite[\href{https://stacks.math.columbia.edu/tag/0G2H}{Lemma 0G2H}]{stacks-project})
    \[
    \Hom_U\left(\cur{F}^{\bullet}|_U, \bigoplus_i \cur{H}^i(\cur{F}^{\bullet})|_U[-i]\right) \cong \varinjlim_n \Hom_X\left(\cur{I}^n \cur{F}^{\bullet}, \bigoplus_i \cur{H}^i(\cur{F}^{\bullet})[-i]\right)
    \]
    the isomorphism \eqref{splitiso} corresponds to some morphism $\cur{I}^k \cur{F}^{\bullet} \to \bigoplus_i \cur{H}^i(\cur{F}^{\bullet})[-i]$ whose cone is supported on $X - U$. Also, the cone of the inclusion $\cur{I}^k \cur{F}^{\bullet} \to \cur{F}^{\bullet}$ is supported on $X - U$. Hence, by Lemma \ref{suppdiff}, we see that 
    \[
    \limsup_{m \to \infty} \frac{\dim H^i(X, \cur{F}^{\bullet} \otimes \cur{L}^{\otimes m})}{m^n/n!} = \limsup_{m \to \infty} \frac{\dim H^i(X, (\bigoplus_i \cur{H}^i(\cur{F}^{\bullet})[-i]) \otimes \cur{L}^{\otimes m})}{m^n/n!}
    \]
    so we are back in the split case discussed above.
\end{proof}

\section{Asymptotic Functions on Non-reduced Schemes}\label{section3}

We now generalize to the case when $X$ is non-reduced. Recall that for a coherent sheaf $\cur{F}$ on an irreducible (not necessarily reduced) scheme $X$ with generic point $\eta$, the rank of $\cur{F}$ is defined as the length of $\cur{F}_{\eta}$ as an $\cur{O}_{X, \eta}$-module. This definition of the rank is the same as in \cite[Definition 1.1.23]{lazarsfeld2004positivity}, but it conflicts with the usual notion of rank for locally free sheaves (e.g. in our definition the rank of $\cur{O}_X$ is $\length_{\cur{O}_{X, \eta}}(\cur{O}_{X, \eta})$, which may not be $1$.) However, we will only be using the meaning of the word ``rank'' introduced here, so there should be no possibility for confusion. 

\subsection{The general case in characteristic $0$}

Let $X$ be a proper irreducible scheme over a field $k$. We will denote its reduced subscheme by $X_{\red}$, and for any sheaf $\cur{F}$ we will denote the restriction to $X_{\red}$ by $\cur{F}_{\red}$. Let $\cur{I}$ be its ideal sheaf of nilpotents. Let $\cur{L}$ be a line bundle on $X$, and let $\cur{F}$ be a coherent $\cur{O}_X$-module. There exists a filtration
\[
0 = \cur{F}_0 \subset \cur{F}_1 \subset \cdots \subset \cur{F}
\]
such that each $\cur{F}_j/\cur{F}_{j-1}$ is of the form $i_*\cur{G}_j$ for some coherent sheaf on $X_{\red}$ by \cite[\href{https://stacks.math.columbia.edu/tag/01YF}{Lemma 01YF}]{stacks-project}. By inducting on the length of this filtration, it is easy to see that
\begin{equation}
\limsup_{m \to \infty} \frac{h^i(X, \cur{F} \otimes \cur{L}^{\otimes m})}{m^d/d!} \leq \rank{\cur{F}} \cdot \widehat{h}^i(X_{\red}, \cur{L}_{\red}).
\end{equation}
This inequality appears in \cite[Lemma 3.2.3]{BurgosGil2020}. Let $\cur{F}^{\bullet}$ be a bounded complex of coherent $\cur{O}_X$-modules. Combined with the spectral sequence \eqref{hypcohseq}, we obtain
\begin{equation}\label{easyineq}
     \limsup_{m \to \infty} \frac{h^i(X, \cur{F}^{\bullet} \otimes \cur{L}^{\otimes m})}{m^d/d!} \leq \sum_{s+t = i} \rank{\cur{H}^t(\cur{F}^{\bullet})} \cdot \widehat{h}^s(X_{\red}, \cur{L}_{\red}).
\end{equation}
Our goal in this subsection is to prove the reverse inequality when $k$ has characteristic $0$.

\begin{Lemma}\label{shortlog}
    Let $X$ be a scheme over a field $k$ of characteristic $0$. Let $\cur{I}$ be its ideal sheaf of nilpotents. Then there is an isomorphism of sheaves of abelian groups
    \[
        \log: (1 + \cur{I})^* \to \cur{I} \quad
        (1 + a) \mapsto \sum_{k \geq 1} (-1)^{k-1}\frac{a^k}{k}
    \]
    with inverse
    \[
        \exp: \cur{I} \to (1 + \cur{I})^* \quad a \mapsto \sum_{k \geq 0} \frac{a^k}{k!}.
    \]
\end{Lemma}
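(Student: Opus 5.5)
The plan is to define $\log$ and $\exp$ on sections over affine opens, where everything reduces to nilpotent algebra. There the defining identities follow from formal power series identities over $\mathbf{Q}$. We then glue to get morphisms of sheaves.

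\textbf{Local definition.} Let $U=\spec A$ be an affine open of $X$. Then $\cur{I}(U)$ is the nilradical of $A$, so every $a\in\cur{I}(U)$ satisfies $a^N=0$ for some $N$. Hence both series in the statement are finite sums. They make sense because $A$ is a $k$-algebra with $\ch k=0$, so $\mathbf{Q}\subset k\subset A$ and the denominators $k$ and $k!$ are invertible. Moreover $1+a$ is a unit with inverse $\sum_{k\ge0}(-a)^k$, so $1+\cur{I}$ really lies in $\cur{O}_X^*$. Also $\log(1+a)\in\cur{I}(U)$ and $\exp(a)-1\in\cur{I}(U)$, since every term of these sums is a multiple of $a$. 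The formulas are visibly compatible with restriction to smaller affine opens. Since affine opens form a basis of the topology and both sides are sheaves, the maps extend uniquely to morphisms of sheaves. Concretely, over an arbitrary open $U$ we define them on an affine cover and glue.

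\textbf{Identities via formal power series.} We must check three things on the affine pieces:
\begin{itemize}
\item $\log((1+a)(1+b))=\log(1+a)+\log(1+b)$;
\item $\exp(\log(1+a))=1+a$;
\item $\log(\exp(a))=a$.
\end{itemize}
These are classical identities in $\mathbf{Q}[[x,y]]$ and $\mathbf{Q}[[x]]$, namely $\log((1+x)(1+y))=\log(1+x)+\log(1+y)$, $\exp(\log(1+x))=1+x$, and $\log(\exp x)=x$.

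Given commuting $a,b\in A$ with $a^N=b^N=0$, there is a ring homomorphism $\mathbf{Q}[[x,y]]/(x^N,y^N)\to A$ sending $x\mapsto a$ and $y\mapsto b$. It is well defined because the quotient is the polynomial ring $\mathbf{Q}[x,y]/(x^N,y^N)$ and $\mathbf{Q}\subset A$. Each identity holds already in this truncated ring, since truncation modulo $(x^N,y^N)$ commutes with the operations involved. Applying the homomorphism transports the identities to $A$. Thus $\log$ is a group homomorphism from the multiplicative group to the additive group, and $\exp$ is a two-sided inverse. Consequently $\exp$ is also a homomorphism.

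\textbf{Main obstacle.} The only delicate point is justifying the substitution step: composing truncated series must agree with composing the finite sums in $A$. I would handle this by noting the following. The constant term of $\log(1+x)$ and of $\exp(x)-1$ is zero, so these series lie in the ideal $(x)$. Therefore substituting one into the other modulo $x^N$ only involves finitely many terms and is compatible with the quotient map. All remaining checks are routine.
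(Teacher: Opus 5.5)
Your proposal is correct and takes the same approach as the paper: the paper's proof says only that the series are well defined by nilpotence and characteristic $0$, and that the inverse identities follow from formal power series computations. Your version fills in what the paper leaves implicit: you work on affine opens (where each section of the nilradical is nilpotent, which is all one has for a general scheme), you verify that $\log$ turns products into sums, and you justify evaluating the power series identities on nilpotent elements by passing through truncated rings.
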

\begin{proof}
    The formulas for $\log$ and $\exp$ are both well-defined since $\cur{I}$ is nilpotent and $k$ has characteristic $0$. They are inverses by formal power series computations.
\end{proof}

Since $X_{\red}$ is a proper integral scheme over $k$ and $k$ has characteristic $0$ (so it is perfect), it contains an affine dense open subset $U_{\red}$ that is smooth. Let $U = \spec A$ and $U_{\red} = \spec A/I$. Since $k \to A/I$ is smooth, by the infinitesimal lifting property, e.g. \cite[\href{https://stacks.math.columbia.edu/tag/07K4}{Lemma 07K4}]{stacks-project}, we obtain a lift $A/I \to A$ of the identity on $A/I$. Thus we have a morphism $U \to U_{\red}$ such that $U_{\red} \to U \to U_{\red}$ is the identity on $U_{\red}$.

Consider the graph morphism $\Gamma: U \to U \times X_{\red}$ where $U \to X_{\red}$ is the composition $U \to U_{\red} \to X_{\red}$. Since $X_{\red}$ is separated, the graph morphism $\Gamma$ is a closed immersion. Let $X'$ be the scheme theoretic closure of $\Gamma(U)$ in $X \times X_{\red}$. Let $b: X' \to X$ be the map given by the first projection and let $f: X' \to X_{\red}$ be the map given by the second projection.

\begin{Lemma}\label{tosplit}
    Let $X, U, X'$ and $b: X' \to X$, $f: X' \to X_{\red}$ be as above. We have
    \begin{enumerate}
        \item $b^{-1}(U) \to U$ is an isomorphism.
        \item $f|_{b^{-1}(U)}: b^{-1}(U) \to X_{\red}$ lands in $U_{\red}$ and is equal to the composition of the isomorphism $b^{-1}(U) \to U$ with the given $U \to U_{\red}$.
        \item $X'$ is proper, and therefore $b$ and $f$ are both proper. 
        \item $b_{\red}: X'_{\red} \to X_{\red}$ and $f_{\red}: X'_{\red} \to X_{\red}$ are equal as morphisms of schemes, and they are isomorphisms of schemes.
        \item Both $b$ and $f$ are finite.
        \item The composition $X'_{\red} \to X' \xrightarrow{f} X_{\red} \xrightarrow{f_{\red}^{-1}} X'_{\red}$ is the identity map.
    \end{enumerate}
\end{Lemma}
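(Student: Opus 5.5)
I will prove the six items roughly in the order stated, since each uses the previous ones. Throughout, write $\Gamma = (\iota, g)$ where $\iota: U \to X$ is the open immersion and $g: U \to U_{\red} \hookrightarrow X_{\red}$.

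For (1) and (2), the key observation is that the scheme theoretic closure commutes with restriction to opens. So $b^{-1}(U) = X' \cap (U \times X_{\red})$ is the scheme theoretic closure of $\Gamma(U)$ in $U \times X_{\red}$. Since $\Gamma(U)$ is already closed there (graph of a morphism to a separated scheme), this closure is $\Gamma(U)$ itself. Hence $b^{-1}(U) \cong U$ via the first projection. The second projection restricted to it is $g$, which gives (2).

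For (3), $X \times X_{\red}$ is proper over $k$ and $X'$ is a closed subscheme, so $X'$ is proper. Then $b$ and $f$ are proper because morphisms from a proper scheme to a separated one are proper.

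For (4), I first note that $X'$ is irreducible: it is the closure of the irreducible $\Gamma(U)$, and $U$ is dense in $X$ because $X$ is irreducible. Then I pass to reductions.
\begin{itemize}
\item $X'_{\red}$ is the reduced closure of $\Gamma(U_{\red})$, since closure commutes with reduction set-theoretically and $X'_{\red}$ is reduced.
\item On $U_{\red}$, the map $g$ restricts to the identity $U_{\red} \to U_{\red}$, because $U_{\red} \to U \to U_{\red}$ is the identity.
\item Hence $\Gamma(U_{\red})$ is the diagonal of $U_{\red}$ inside $X_{\red} \times X_{\red}$.
\end{itemize}
Therefore $X'_{\red}$ is the closure of $\Delta(U_{\red})$ in $X_{\red} \times X_{\red}$. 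Since $X_{\red}$ is separated, $\Delta(X_{\red})$ is closed, reduced and irreducible and contains $\Delta(U_{\red})$ densely, so $X'_{\red} = \Delta(X_{\red})$. Both projections restricted to the diagonal are the same isomorphism onto $X_{\red}$. One technical point needs care here: the closure of $\Gamma(U)$ taken in $X \times X_{\red}$ has reduction equal to the reduced closure inside $(X\times X_{\red})_{\red} = X_{\red}\times X_{\red}$. I will check this by comparing underlying closed sets and then taking reduced structures.

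For (5), $b$ and $f$ are proper by (3), and they are quasi-finite because their reductions are isomorphisms by (4): fibres are unchanged set-theoretically by passing to reductions. Proper and quasi-finite implies finite.

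For (6), the composition $X'_{\red} \to X' \to X_{\red}$ equals $f_{\red}$ by definition of the reduction of a morphism. Composing with $f_{\red}^{-1}$ then gives the identity.

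The main obstacle I expect is (4), specifically justifying the passage between scheme-theoretic closure and reduction. If this becomes delicate, I would argue purely topologically and via uniqueness of reduced induced structures.
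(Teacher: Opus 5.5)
Your proposal is correct and matches the paper's proof item by item, except in (4): there you identify $X'_{\red}$ directly with the diagonal, computing its support as $\overline{\Delta(U_{\red})}=\Delta(X_{\red})$ and invoking uniqueness of reduced induced structures. The paper instead first gets $b_{\red}=f_{\red}$ from the fact that morphisms from a reduced scheme to a separated one which agree on a dense open coincide, and then uses that the closed immersion $(b_{\red},f_{\red})$ factors through $\Delta$ and is surjective. Both arguments rest on the same inputs (separatedness of $X_{\red}$, density of $U_{\red}$, and $U_{\red}\to U\to U_{\red}$ being the identity). Your identity $(X\times X_{\red})_{\red}=X_{\red}\times X_{\red}$ uses that $k$ is perfect, but your fallback of comparing reduced closed subschemes of $X\times X_{\red}$ with the same support avoids needing it.
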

\begin{proof}\leavevmode
    \begin{enumerate}
    \item 
    We know that $b^{-1}(U)$ is $X' \cap (U \times X_{\red})$, which is the closure of $\Gamma(U)$ inside $U \times X_{\red}$, but $\Gamma(U) \to U \times X_{\red}$ is a closed immersion, so $b^{-1}(U) \to U$ is the same as $\Gamma(U) \to U$ induced by the projection which is an isomorphism. 
    
    \item By 1, $b^{-1}(U) \to X_{\red}$ is then $\Gamma(U) \hookrightarrow U \times X_{\red} \to X_{\red}$ which by construction of the graph is equal to $U \to \Gamma(U) \to U_{\red}$.

    \item We have that $X'$ is proper since it is a closed subscheme of the proper scheme $X \times X_{\red}$. Then $b, f$ are proper morphisms since $X, X_{\red}$ are separated. 

    \item By the first paragraph, the morphism $b_{\red}$ and $f_{\red}$ both restrict to the isomorphism $b_{\red}^{-1}(U_{\red}) \to U_{\red}$. Since $X_{\red}$ is separated and $b_{\red}^{-1}(U_{\red})$ is dense in $X'_{\red}$, we see that $b_{\red} = f_{\red}$. 
    
    By definition of $X'$, the map $(b, f): X' \to X \times X_{\red}$ is a closed immersion, so the induced morphism 
    \[
    (b_{\red}, f_{\red}): X'_{\red} \to X_{\red} \times X_{\red}
    \] 
    is also a closed immersion. We just showed $b_{\red} = f_{\red}$, so $(b_{\red}, f_{\red})$ factors through $\Delta: X_{\red} \to X_{\red} \times X_{\red}$, which is also a closed immersion since $X_{\red}$ is separated. Therefore $X'_{\red} \to X_{\red}$ (which is $b_{\red} = f_{\red}$) is also a closed immersion. However $b_{\red}$ maps $b_{\red}^{-1}(U_{\red})$ isomorphically to $U_{\red}$ which is a scheme theoretically dense open subset of $X_{\red}$, so we see that $b_{\red}$ must also be surjective. Thus $b_{\red}$ is an isomorphism.

    \item We proved that $b$ and $f$ are bijective on points, and they are proper, so they are finite.

    \item The composition $X'_{\red} \to X' \xrightarrow{f} X_{\red}$ is just $f_{\red}$, so the statement is clear.

    \end{enumerate}
\end{proof}

\begin{Lemma}\label{finbasic}
    Let $f: X \to Y$ be a finite morphism of irreducible schemes of finite type over $k$. Let $\cur{F}^{\bullet}$ be a complex of coherent $\cur{O}_X$-modules. Then
    \begin{enumerate}[label=\alph*)]
        \item $f_*\cur{F}^{\bullet} = Rf_*\cur{F}^{\bullet}$;
        \item $\rank{\cur{H}^q(f_*\cur{F}^{\bullet})} \geq \rank{\cur{H}^q(\cur{F}^{\bullet})}$;
        \item Let $Z$ be an integral scheme over $k$ and let $z \in Z$ be any point. Let $\cur{E}^{\bullet}$ be a bounded complex of coherent sheaves on $X \times Z$. Consider the (Cartesian) diagram
        \[\begin{tikzcd}
        	X & {X \times Z} & {X \times \spec k(z)} \\
        	Y & {Y \times Z} & {Y \times \spec k(z)} \\
        	{\spec k} & Z & {\spec k(z)}
        	\arrow["f", from=1-1, to=2-1]
        	\arrow["{p_1}"', from=1-2, to=1-1]
        	\arrow["{f \times \id}", from=1-2, to=2-2]
        	\arrow["i"', from=1-3, to=1-2]
        	\arrow["f_{k(z)}", from=1-3, to=2-3]
        	\arrow[from=2-1, to=3-1]
        	\arrow[from=2-2, to=2-1]
        	\arrow[from=2-2, to=3-2]
        	\arrow[from=2-3, to=2-2]
        	\arrow[from=2-3, to=3-3]
        	\arrow[from=3-2, to=3-1]
        	\arrow[from=3-3, to=3-2]
        \end{tikzcd}\]
        We have that $((f \times \id)_*\cur{E}^{\bullet})_z = f_{k(z), *}(\cur{E}^{\bullet}_z)$, where the subscript by $z$ denotes the fiber over $z$. 
        \item In the situation of c), if $z = \spec k \hookrightarrow Z$ is a $k$-rational point, then $(p_1^*\cur{F}^{\bullet})_z = \cur{F}^{\bullet}$.
    \end{enumerate}
\end{Lemma}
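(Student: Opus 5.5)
Parts a), c), d) are formal consequences of finiteness (hence affineness) of $f$. Part b) is a length comparison at generic points.

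For a), since $f$ is finite it is affine, so $R^jf_*\cur{G}=0$ for $j>0$ and every quasi-coherent $\cur{G}$. Each term of $\cur{F}^{\bullet}$ is therefore $f_*$-acyclic. Computing $Rf_*$ of a bounded-below complex by a termwise acyclic representative gives $Rf_*\cur{F}^{\bullet}=f_*\cur{F}^{\bullet}$. For an unbounded complex we can argue instead on an affine cover of $Y$: over $\spec B$ with preimage $\spec A$, $f_*$ is the restriction of scalars $\mathrm{Mod}_A\to\mathrm{Mod}_B$. This functor is exact, so it preserves quasi-isomorphisms and computes its own derived functor.

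For b), exactness of $f_*$ gives $\cur{H}^q(f_*\cur{F}^{\bullet})=f_*\cur{H}^q(\cur{F}^{\bullet})$. So it suffices to show $\rank f_*\cur{G}\ge\rank\cur{G}$ for a coherent $\cur{G}$ on $X$. Let $\xi$ and $\eta$ be the generic points of $X$ and $Y$.
\begin{itemize}
\item If $f$ is not dominant, then $\dim f(X)<\dim Y$ because $f$ is finite, and $f_*\cur{G}$ vanishes at $\eta$. In this case the statement can only be meant with $f$ dominant, so I will assume this, as in the intended application where $f$ is surjective.
\item If $f$ is dominant, then $f$ is surjective and $f^{-1}(\eta)=\{\xi\}$, since $\dim$ is preserved and the fibre over the generic point is a finite set of points of dimension $\dim X$, and $X$ is irreducible. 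Hence $(f_*\cur{G})_\eta=\cur{G}_\xi$, regarded as an $\cur{O}_{Y,\eta}$-module via the local homomorphism $\cur{O}_{Y,\eta}\to\cur{O}_{X,\xi}$, which is finite.
\end{itemize}
For a module $M$ of finite length over $\cur{O}_{X,\xi}$, filter $M$ with quotients $\kappa(\xi)$. Each such quotient has $\cur{O}_{Y,\eta}$-length $[\kappa(\xi):\kappa(\eta)]\ge1$, since $\mathfrak m_\eta$ kills $\kappa(\xi)$ by locality. Lengths are additive, so $\length_{\cur{O}_{Y,\eta}}M=[\kappa(\xi):\kappa(\eta)]\cdot\length_{\cur{O}_{X,\xi}}M\ge\length_{\cur{O}_{X,\xi}}M$.

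For c), the fibre over $z$ is the derived-free pullback along $Y\times\spec k(z)\to Y\times Z$, applied termwise. The statement is the base change $g^*(f\times\id)_*\cur{E}=f_{k(z),*}g'^*\cur{E}$ for each coherent term $\cur{E}$. This holds for any affine morphism and arbitrary base change, since $f\times\id$ is finite and hence affine. Locally this is the identity $M\otimes_{B\otimes R}(B\otimes k(z))\cong M\otimes_{A\otimes R}(A\otimes k(z))$ as $B\otimes k(z)$-modules, where $M$ is an $A\otimes R$-module and $Z\supset\spec R$. Both sides equal $M\otimes_R k(z)$. The identification is compatible with the differentials by naturality.

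For d), when $k(z)=k$ the composite $X\cong X\times\spec k\xrightarrow{i}X\times Z\xrightarrow{p_1}X$ is the identity. Hence $i^*p_1^*\cur{F}^{\bullet}=(p_1\circ i)^*\cur{F}^{\bullet}=\cur{F}^{\bullet}$ termwise, using the underived pullback as in c).

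The only non-formal point is b), namely the identification of the fibre over the generic point and the resulting length inequality $[\kappa(\xi):\kappa(\eta)]\ge1$. I expect this to be the main step to check carefully, together with the dominance caveat.
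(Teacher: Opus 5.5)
Your proposal is correct and follows the paper's proof part by part. For a) and c) both arguments rest on affineness of $f$: the paper cites Stacks for a) and checks c) through the same identification with $M\otimes_R k(z)$. For d) both use $p_1\circ i=\mathrm{id}$. For b) both compare lengths under restriction of scalars, though the paper uses only the cruder fact that every $\cur{O}_{X,\xi}$-filtration is also an $\cur{O}_{Y,\eta}$-filtration, rather than your exact factor $[\kappa(\xi):\kappa(\eta)]$.

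Your dominance caveat is legitimate and is left implicit in the paper, whose argument silently uses $(f_*\cur{G})_\eta=\cur{G}_\xi$. Part b) indeed fails for the inclusion of a closed point into $\A^1_k$. This is harmless, because b) is only ever applied to the homeomorphisms $\pi$ and their base changes to $k(t)$, which are dominant.
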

\begin{proof}\leavevmode
    \begin{enumerate}[label=\alph*)]
        \item This is \cite[\href{https://stacks.math.columbia.edu/tag/0G9R}{Lemma 0G9R}]{stacks-project}.
        \item Since $f$ is finite, the statement is implied by the fact that if $A \to B$ is a ring homomorphism, then for any $B$-module $M$, the length of $M$ viewed as an $A$-module is at least the length of $M$ viewed as an $B$-module. This is because any filtration of $M$ by $B$-submodules is also a filtration by $A$-submodules.

        \item Since $f$ is affine, we can reduce to the case where $X, Y, Z$ are all affine. So let $X = \spec B$, $Y = \spec A$, and $Z = \spec R$, where $A, B, R$ are finitely generated $k$-algebras and we are given a prime ideal $\fk{p} \subset R$. Let $M$ be a $B \otimes R$-module. Let $k(\fk{p}) = R_{\fk{p}}/\fk{p}R_{\fk{p}}$. We are trying to show the isomorphism
        \[
        (M \otimes_R k(\fk{p}))_{A \otimes k(\fk{p})} \cong M_{A \otimes R} \otimes_R k(\fk{p}),
        \]
        but this is clear since they are the same set with the same $A \otimes k(\fk{p})$-action.
        
        \item Observe that $p_1 \circ i$ is the identity on $X$ after identifying $X \times \spec k$ with $X$. The operation of taking fiber by definition is pulling back by $X \to X \times Z$. Therefore 
        \[(p_1^*\cur{F}^{\bullet})_z = i^*p_1^*\cur{F}^{\bullet} = \cur{F}^{\bullet}.\]
    \end{enumerate}
\end{proof}

\begin{Theorem}\label{ch0formula}
    Let $X$ be an irreducible proper scheme of dimension $d$ over a field $k$ of characteristic $0$, $\cur{L}$ a line bundle on $X$, and $\cur{F}^{\bullet}$ a bounded complex of coherent $\cur{O}_X$-modules. We have
    \[
    \limsup_{m \to \infty} \frac{h^i(X, \cur{F}^{\bullet} \otimes \cur{L}^{\otimes m})}{m^d/d!} = \sum_{s+t=i} \rank{\cur{H}^t(\cur{F}}^{\bullet}) \cdot \widehat{h}^s(X_{\red}, \cur{L}_{\red}).
    \]
\end{Theorem}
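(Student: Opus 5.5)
Since \eqref{easyineq} already gives the inequality ``$\leq$'', the task is the reverse inequality. The plan is to transport the problem to $X_{\red}$ along the finite map $f: X' \to X_{\red}$ of Lemma \ref{tosplit}, and then apply Theorem \ref{redformula} on the variety $X_{\red}$.

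\textbf{Step 1 (pass to $X'$).} I would replace $\cur{F}^{\bullet}$ on $X$ by $\cur{G}^{\bullet} := Lb^*\cur{F}^{\bullet}$ on $X'$, or more safely by a complex on $X'$ that agrees with $\cur{F}^{\bullet}$ over $U$. By Lemma \ref{tosplit}(1), $b$ is an isomorphism over the dense open $U$. So the cone of $\cur{F}^{\bullet} \to Rb_*\cur{G}^{\bullet} = b_*\cur{G}^{\bullet}$ (equality by Lemma \ref{finbasic}(a)) is supported on $X - U$, which has dimension $< d$. To get a genuine morphism I would twist by a power of an ideal, using Deligne's formula as in the proof of Theorem \ref{redformula}. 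Lemma \ref{suppdiff} then shows that the limsups of $h^i(X,\cur{F}^{\bullet}\otimes\cur{L}^{\otimes m})$ and $h^i(X', \cur{G}^{\bullet}\otimes b^*\cur{L}^{\otimes m})$ agree; the projection formula and finiteness of $b$ identify the cohomology groups. The ranks $\rank \cur{H}^t$ at the generic point are unchanged, because the generic stalks coincide.

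\textbf{Step 2 (compare $b^*\cur{L}$ with $f^*\cur{L}_{\red}$).} By Lemma \ref{tosplit}(4) and (6), the line bundles $b^*\cur{L}$ and $f^*\cur{L}_{\red}$ on $X'$ have the same restriction to $X'_{\red}$. Hence $b^*\cur{L}\otimes f^*\cur{L}_{\red}^{-1}$ is a class in $H^1(X', (1+\cur{I}')^*)$, where $\cur{I}'$ is the nilradical of $X'$. By Lemma \ref{shortlog}, in characteristic $0$ this group is $H^1(X', \cur{I}')$, a $k$-vector space. Its $m$-th power therefore corresponds to $m\cdot c$ for a fixed class $c$. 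I would then use a deformation argument: take $Z = \mathbf{A}^1$ and a line bundle $\cur{M}$ on $X' \times Z$ whose fiber over $z$ is $\exp(z c)$. Fibers over $0$ and $1$ give $f^*\cur{L}_{\red}^{\otimes m}$ and $b^*\cur{L}^{\otimes m}$ after rescaling $z \mapsto mz$, and Lemma \ref{finbasic}(c),(d) handle base change. The point is that $h^i$ is upper semicontinuous in $z$. The desired inequality goes in the direction ``general fiber $\leq$ special fiber'', so $b^*\cur{L}^{\otimes m}$ has to play the role of the special fiber. I expect this to be the main obstacle: the rescaling for each $m$ must be arranged so that the fiber over $0$ is the $b^*\cur{L}^{\otimes m}$ side. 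If that fails, I would instead try to show directly that $\exp(mc)$ is trivial on the relevant pieces, or use the filtration by powers of $\cur{I}'$ to make the correction term contribute only $O(m^{d-1})$.

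\textbf{Step 3 (push to $X_{\red}$).} Granting Step 2, we have
\[
h^i(X', \cur{G}^{\bullet}\otimes f^*\cur{L}_{\red}^{\otimes m}) = h^i(X_{\red}, f_*\cur{G}^{\bullet}\otimes \cur{L}_{\red}^{\otimes m})
\]
by the projection formula and Lemma \ref{finbasic}(a). By Theorem \ref{redformula} on the variety $X_{\red}$, the limsup of the right side equals
\[
\sum_{s+t=i}\rank\cur{H}^t(f_*\cur{G}^{\bullet})\,\widehat{h}^s(X_{\red},\cur{L}_{\red}).
\]
By Lemma \ref{finbasic}(b), each $\rank\cur{H}^t(f_*\cur{G}^{\bullet})$ is at least $\rank\cur{H}^t(\cur{F}^{\bullet})$. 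This yields ``$\geq$'', and combined with \eqref{easyineq} it gives the equality.
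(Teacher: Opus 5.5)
\textbf{Step 2 is a genuine gap.} Your Steps 1 and 3 are sound. Step 1 is the paper's closing reduction via Lemma~\ref{tosplit}; the termwise unit map $\cur{F}^{\bullet}\to b_*b^*\cur{F}^{\bullet}$ already works, with no need for Deligne's formula or $Lb^*$. But Step 2 carries the entire content of the lower bound, and the mechanism you propose cannot supply it.

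Upper semicontinuity on $X'\times\A^1_k$ compares a closed fiber only with the \emph{generic} fiber. It says nothing about two closed fibers such as $z=0$ and $z=m$. No rescaling changes the fact that $f^*\cur{L}_{\red}^{\otimes m}$ and $b^*\cur{L}^{\otimes m}$ both sit over $k$-points.

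Worse, the pointwise inequality you want, $h^i(\cur{G}^{\bullet}\otimes b^*\cur{L}^{\otimes m})\geq h^i(\cur{G}^{\bullet}\otimes f^*\cur{L}_{\red}^{\otimes m})$, is false in general. The split twist over $z=0$ is where cohomology tends to jump \emph{up}. Take $C$ an elliptic curve, $X=C\times_k\spec k[\epsilon]/(\epsilon^2)$, $\pi$ the projection, and $\cur{L}$ the line bundle with cocycle $1+\epsilon c_{ij}$ for a nonzero $c\in H^1(C,\cur{O}_C)$. Then $\pi_*\cur{L}^{\otimes m}$ is the nonsplit extension of $\cur{O}_C$ by $\cur{O}_C$ with class $mc$. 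Hence $h^0(X,\cur{L}^{\otimes m})=1<2=h^0(X,\pi^*\cur{L}_{\red}^{\otimes m})$ for all $m\geq 1$.

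Your fallbacks do not close the gap either. $\exp(mc)$ is nontrivial once $c\neq 0$. The $\cur{I}'$-adic filtration has the same graded pieces for both twists, so by itself it only reproduces the upper bound \eqref{easyineq}; the difference lives in the connecting maps.

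\textbf{The missing idea} is to let the generic point of $\A^1_k$, rather than the fiber over $0$, serve as the comparison object. Keep one fixed line bundle $\cur{M}$ on $X'\times\A^1_k$ with cocycle $\exp(t\otimes\xi_{ij})$, where $\{\xi_{ij}\}$ represents your class $c$. Set $\cur{E}^{\bullet}=(f\times\mathrm{id})_*(p_1^*\cur{G}^{\bullet}\otimes\cur{M})$, and twist by $p_1^*\cur{L}_{\red}^{\otimes m}$, which is constant along $\A^1_k$.

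For each $m$, the fiber over the closed point $t=m$ computes $h^i(X',\cur{G}^{\bullet}\otimes b^*\cur{L}^{\otimes m})$. The generic fiber is $\cur{E}^{\bullet}_{\eta}\otimes\cur{L}_{\red,\eta}^{\otimes m}$, where $\cur{E}^{\bullet}_{\eta}$ is a single bounded complex on the variety $X_{\red}\otimes_k k(t)$, independent of $m$. Semicontinuity then gives generic $\leq$ special for every $m$.

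Your Step 3 argument goes through with $\cur{E}^{\bullet}_{\eta}$ over $k(t)$ in place of $f_*\cur{G}^{\bullet}$ over $k$. The reverse inequality then follows from three facts:
\begin{enumerate}
\item Theorem~\ref{redformula} applied over $k(t)$;
\item the invariance of $\widehat{h}^s$ under the flat base change $k\subset k(t)$;
\item $\rank\cur{H}^t(\cur{E}^{\bullet}_{\eta})\geq\rank\cur{H}^t(\cur{F}^{\bullet})$, from Lemma~\ref{finbasic}(b).
\end{enumerate}
This is exactly the paper's argument.
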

\begin{proof}
First, assume there is some morphism $\pi: X \to X_{\red}$ such that $X_{\red} \xrightarrow{i} X \xrightarrow{\pi} X_{\red}$ is the identity. Notice that $\pi$ is finite because it is proper and bijective on points.

We have the short exact sequence
\[
0 \to (1 + \cur{I})^* \to \cur{O}_X^* \to \cur{O}_{X_{\red}}^* \to 0.\]
Using Lemma \ref{shortlog} we get a long exact sequence
\[
\cdots \to H^1(X, \cur{I}) \to \Pic X \to \Pic X_{\red} \to H^2(X, \cur{I}) \to \cdots.
\]
Both $\cur{L}$ and $\pi^*\cur{L}_{\red}$ map to $\cur{L}_{\red}$ under the map $\Pic X \to \Pic X_{\red}$. Fix a (finite) open cover $\{U_i\}$ over which $\cur{L}$ is trivialized. We can pick representatives $u_{ij} \in \cur{O}_X^*(U_{ij})$ such that $\{\ol{u_{ij}} \in \cur{O}_{X_{\red}}^*(U_{ij})\}$ as a class in $H^1(X_{\red}, \cur{O}_{X_{\red}}^*)$ represents $\cur{L}_{\red}$, and $\{u_{ij} \in \cur{O}_X^*(U_{ij})\}$ represents $\pi^*\cur{L}_{\red}$. Then from the exact sequence we see that there exists some $\{\xi_{ij} \in \cur{I}(U_{ij})\} \in H^1(X_{\red}, \cur{I})$ such that $\{u_{ij}\exp(\xi_{ij}) \in \cur{O}_X^*(U_{ij})\}$ represents $\cur{L}$.

Let $\cur{F}^{\bullet}$ be a bounded complex of coherent sheaves on $X$. Define
\[
\cur{E}^{\bullet}_m = \pi_*(\cur{F}^{\bullet} \otimes \cur{L}^{\otimes m}) \otimes \cur{L}_{\red}^{\otimes -m}.
\]
By the projection formula this is also 
\[
\cur{E}^{\bullet}_m = \pi_*(\cur{F}^{\bullet} \otimes \cur{L}^{\otimes m} \otimes \pi^*\cur{L}_{\red}^{\otimes -m}).
\]
By the analysis above we see that the line bundle $\cur{L}^{\otimes m} \otimes \pi^*\cur{L}_{\red}^{\otimes -m}$ is represented by
\[
\{\exp(\xi_{ij})^m \in (1+\cur{I})^*(U_{ij})\} = \{\exp(m\xi_{ij}) \in (1+\cur{I})^*(U_{ij})\}.
\]
Thus, we may define a line bundle $\cur{M}$ on the scheme $X \times \A^1_k$ by 
\begin{equation}\label{linebundle}
\{\exp(t \otimes \xi_{ij}) \in (k[t] \otimes \cur{O}_X(U_{ij}))^*\} \in H^1(\cur{O}^*_{X \times \A^1_k})
\end{equation}
whose fiber over $m \in \A_k^1$ is $\cur{L}^{\otimes m} \otimes \pi^*\cur{L}_{\red}^{\otimes -m}$. Define 
\[
\cur{E}^{\bullet} = (\pi \times \id)_*(p_1^*\cur{F}^{\bullet} \otimes \cur{M}).
\]
By Lemma \ref{finbasic} part c) and d), the fiber of $\cur{E}^{\bullet}$ over $m \in \A_k^1$ is $\cur{E}_m^{\bullet}$ as previously defined, and the fiber $\cur{E}_{\eta}^{\bullet}$ over the generic point $\eta \in \A_k^1$ is $\pi_*(\cur{F}^{\bullet} \otimes \cur{M}_{\eta})$.

Let $X_{\red, \eta}$ and $\cur{L}_{\red, \eta}$ be the base changes of $X_{\red}$ and $\cur{L}_{\red}$ to $k(\A_k^1) = k(t)$. We know by Theorem \ref{redformula} that
\[
\limsup_{m \to \infty} \frac{h^i(X_{\red, \eta}, \cur{E}_{\eta}^{\bullet} \otimes \cur{L}_{\red, \eta}^{\otimes m})}{m^d/d!} = \sum_{s+t=i} \rank{\cur{H}^t(\cur{E}_{\eta}^{\bullet})} \cdot \widehat{h}^s(X_{\red, \eta}, \cur{L}_{\red, \eta}).\]
because $X_{\red, \eta}$ is still reduced. Also
\[
H^i(X_{\red}, \cur{L}_{\red}^{\otimes m}) \otimes_k k(t) \cong H^i(X_{\red, \eta}, \cur{L}_{\red, \eta}^{\otimes m})
\]
and Lemma \ref{finbasic} part b) implies
\[
\rank \cur{H}^t(\cur{E}^{\bullet}_{\eta}) \geq \rank \cur{H}^t(\cur{F}^{\bullet} \otimes \cur{M}_{\eta}) = \rank \cur{H}^t(\cur{F}^{\bullet}).
\]
In fact since $\pi$ is the closed immersion, here we have equality, but the inequality is enough for our purpose. Using this we have 
\[
\sum_{s+t=i} \rank{\cur{H}^t(\cur{E}_{\eta}^{\bullet})} \cdot \widehat{h}^s(X_{\red, \eta}, \cur{L}_{\red, \eta}) \geq \sum_{s+t=i} \rank{\cur{H}^t(\cur{F}^{\bullet})} \cdot \widehat{h}^s(X_{\red}, \cur{L}_{\red}).
\]
In summary,
\[
\limsup_{m \to \infty} \frac{h^i(X_{\red, \eta}, \cur{E}_{\eta}^{\bullet} \otimes \cur{L}_{\red, \eta}^{\otimes m})}{m^d/d!} \geq \sum_{s+t=i} \rank{\cur{H}^t(\cur{F}^{\bullet})} \cdot \widehat{h}^s(X_{\red}, \cur{L}_{\red}).
\]
On the other hand,
\[
\begin{split}
h^i(X_{\red}, \cur{E}^{\bullet}_m \otimes \cur{L}_{\red}^{\otimes m}) &= h^i(X_{\red}, \pi_*(\cur{F}^{\bullet}
\otimes \cur{L}^{\otimes m})) \\
&= h^i(X_{\red}, R\pi_*(\cur{F}^{\bullet}
\otimes \cur{L}^{\otimes m})) \\
&= h^i(X, \cur{F}^{\bullet} \otimes \cur{L}^{\otimes m})
\end{split}
\]
But the function 
\[s \to h^i((X_{\red} \times \A_k^1)_s, (\cur{E}^{\bullet} \otimes p_1^*\cur{L}_{\red}^{\otimes m})_s)\]
is upper semi-continuous (by \cite[III Theorem 7.7.5]{EGA}, which applies because the projections $X_{\red} \times \A_k^1 \to X_{\red}, \A_k^1$ are flat, so cohomology sheaves of $\cur{E}^{\bullet} \otimes p_1^*\cur{L}_{\red}^{\otimes m}$ are flat over $\A_k^1$), so
\[
h^i(X_{\red}, \cur{E}_{\eta}^{\bullet} \otimes \cur{L}_{\red}^{\otimes m}) \leq h^i(X_{\red}, \cur{E}^{\bullet}_m \otimes \cur{L}_{\red}^{\otimes m}).
\]
for any $m$. Hence we obtain the reverse inequality
\[
\limsup_{m \to \infty} \frac{h^i(X, \cur{F}^{\bullet} \otimes \cur{L}^{\otimes m}))}{m^d/d!} \geq \sum_{s+t=i}\rank{\cur{H}^t(\cur{F}^{\bullet})} \cdot \widehat{h}^s(X_{\red}, \cur{L}_{\red}^{\otimes m}).
\]
Combining this with \eqref{easyineq} completes the proof of the desired equality in this special case.

Now for an arbitrary $X$, by Lemma \ref{tosplit}, there is a finite morphism $b: X' \to X$ that is bijective on points, such that there exists a factorization $X'_{\red} \xrightarrow{i'} X' \to X'_{\red}$ of the identity on $X'_{\red}$. By what we have proved, we know that
\begin{equation}\label{almost}
\limsup_{m \to \infty} \frac{h^i(X', b^*\cur{F}^{\bullet} \otimes b^*\cur{L}^{\otimes m}))}{m^d/d!} = \sum_{s+t=i}\rank{\cur{H}^s(b^*\cur{F}^{\bullet})} \cdot \widehat{h}^t(X'_{\red}, (b^*\cur{L})_{\red}^{\otimes m}).
\end{equation}
We know that $X'_{\red} \cong X_{\red}$ via $b_{\red}$, so $(b^*\cur{L})_{\red} \cong \cur{L}_{\red}$, and $b$ is an isomorphism on a dense open subset, so $\rank{\cur{H}^q(b^*\cur{F}^{\bullet})} = \rank{\cur{H}^q(\cur{F}^{\bullet})}$. Thus the right side of \eqref{almost} is equal to
\[
\sum_{s+t=i}\rank{\cur{H}^t(\cur{F}^{\bullet})} \cdot \widehat{h}^s(X_{\red}, \cur{L}_{\red}^{\otimes m}).
\]
For the left side of \eqref{almost}, we have, by finiteness of $b$,
\[
H^i(X', b^*\cur{F}^{\bullet} \otimes b^*\cur{L}^{\otimes m})) = H^i(X, b_*(b^*\cur{F}^{\bullet} \otimes b^*\cur{L}^{\otimes m})) = H^i(X, b_*b^*\cur{F}^{\bullet} \otimes \cur{L}^{\otimes m})).
\]
Since $b$ is an isomorphism on a dense open subset, the cone of $\cur{F}^{\bullet} \to b_*b^*\cur{F}^{\bullet}$ is supported on a lower dimensional closed subset. Thus by Lemma \ref{suppdiff}, 
\[
\limsup_{m \to \infty} \frac{h^i(X, \cur{F}^{\bullet} \otimes \cur{L}^{\otimes m}))}{m^d/d!} = \limsup_{m \to \infty} \frac{h^i(X, b_*b^*\cur{F}^{\bullet} \otimes \cur{L}^{\otimes m}))}{m^d/d!}.
\]
Combining all the equalities finishes the proof.

\end{proof}

\subsection{The general case in characteristic $p$}

In this subsection we will prove the same formula when the field $k$ has characteristic $p$. Notice that in this case, the $\exp$ operation as in Lemma \ref{shortlog} cannot be defined, so the method in characteristic $0$ cannot apply. Instead, we will make use of the absolute Frobenius morphism when $k$ is perfect. The first lemma here is to prepare us for base changing to the perfect closure. 

\begin{Lemma}\label{toperf}
    Let $X$ be an irreducible scheme of finite type over a field $k$ of characteristic $p > 0$. Let $k'$ be a perfect closure of $k$, and let $X' = X \otimes_k k'$. Let $\cur{F}$ be a coherent sheaf on $X$, and let $\cur{F}' = \cur{F} \otimes_k k'$. Then
    \[
    \rank{\cur{F}'} = C(X) \cdot \rank{F}.
    \]
    for some constant $C(X) > 0$ depending on the scheme $X$ but independent of $\cur{F}$. Moreover, $C(X) = C(X_{\red})$.
\end{Lemma}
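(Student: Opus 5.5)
The plan is to compute everything at the generic point, where rank is a length over an Artinian local ring, and to exploit the flatness of $k \to k'$.

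\textbf{Step 1: the geometry of $X' \to X$.} Since $k'/k$ is purely inseparable, the projection $X' \to X$ is a universal homeomorphism. So $X'$ is irreducible, and its generic point $\eta'$ is the unique point over the generic point $\eta$ of $X$. Take an affine open $\spec A \ni \eta$. Then $\spec(A \otimes_k k')$ is an affine open of $X'$, and
\[
\cur{O}_{X',\eta'} = (A \otimes_k k')_{\fk{q}},
\]
where $\fk{q}$ is the unique prime over $\fk{p}_\eta$. Every element of $A \setminus \fk{p}_\eta$ maps outside $\fk{q}$. I would check that $\cur{O}_{X,\eta} \otimes_k k'$ is already local, with unique prime $\fk{q}$ by the homeomorphism, and hence equals $\cur{O}_{X',\eta'}$. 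The ring $\cur{O}_{X',\eta'}$ is Noetherian, being a localization of a finite type $k'$-algebra, and zero-dimensional. So it is Artinian.

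\textbf{Step 2: reduce to a single residue field.} Since $k \to k'$ is flat,
\[
\cur{F}'_{\eta'} = \cur{F}_\eta \otimes_k k'.
\]
Choose a composition series of $\cur{F}_\eta$ over the Artinian local ring $\cur{O}_{X,\eta}$. It has $\rank \cur{F}$ steps, and each successive quotient is $\kappa(\eta) = K(X_{\red})$. Tensoring with $k'$ is exact, so we obtain a filtration of $\cur{F}'_{\eta'}$ whose $\rank\cur{F}$ quotients are all isomorphic to $\kappa(\eta) \otimes_k k'$. Length is additive, so
\[
\rank \cur{F}' = \rank \cur{F} \cdot C(X), \qquad C(X) := \length_{\cur{O}_{X',\eta'}}\bigl(\kappa(\eta)\otimes_k k'\bigr).
\]

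\textbf{Step 3: finiteness, positivity, and $C(X)=C(X_{\red})$.} The module structure of $\kappa(\eta)\otimes_k k'$ factors through the quotient ring $\kappa(\eta)\otimes_k k'$ itself. Lengths over a quotient ring agree with lengths over the ring, so $C(X)$ is the length of $\kappa(\eta)\otimes_k k'$ over itself. This ring is the local ring of $X_{\red}\otimes_k k'$ at its generic point, by the same argument as in Step 1 applied to $X_{\red}$. Hence it is Artinian and nonzero, so $0 < C(X) < \infty$. The constant depends only on the field $\kappa(\eta) = K(X_{\red})$ over $k$, so it is independent of $\cur{F}$ and satisfies $C(X) = C(X_{\red})$.

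\textbf{Main obstacle.} The delicate point is Step 1: identifying $\cur{O}_{X,\eta}\otimes_k k'$ with the local ring at $\eta'$ and showing it is Artinian, even though $k'$ may be infinite over $k$. I would handle this by working inside the finite type $k'$-algebra $A\otimes_k k'$. Noetherianity comes from finite type, and the uniqueness of the prime over $\fk{p}_\eta$ comes from the purely inseparable (radicial) nature of $k'/k$.
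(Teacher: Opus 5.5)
Your proposal is correct and takes essentially the same route as the paper: localize at the generic point and use multiplicativity of length along the flat local map $\cur{O}_{X,\eta} \to \cur{O}_{X',\eta'}$, so that $C(X)$ is the length of the closed fiber $\kappa(\eta) \otimes_k k'$ (the paper's $B_{\fk{q}}/\fk{p}B_{\fk{q}}$). Your composition-series argument in Step 2 is the proof of the Stacks Project lemma (Tag 02M1) that the paper cites instead. Your identification of that fiber with the generic local ring of $X_{\red} \otimes_k k'$ gives $C(X) = C(X_{\red})$ in the same way the paper does.
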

\begin{proof}
    We may assume $X$ is affine. So let $X = \spec A$ where $A$ is a finitely generated $k$-algebra, and $\cur{F} = \widetilde{M}$ where $M$ is a finitely generated $A$-module. By replacing $A$ with its localization at the minimal prime, we may assume $A$ is local. Let $B = A \otimes_k k'$, which is a finitely generated $k'$-algebra (in particular, it is noetherian). Since $k'/k$ is a purely inseparable extension, the morphism $X' = \spec B \to X$ induced by $A \to B = A \otimes_k k'$ is a homeomorphism (see \cite[Proposition 2.7]{Liu}). Thus $\spec B$ is still irreducible. If $\fk{p}$ is the minimal prime of $A$, then $\fk{q} = \sqrt{\fk{p}B}$ is the minimal prime of $B$. We have $\cur{F}' = \widetilde{M \otimes_k k'} = \widetilde{M \otimes_A B}$, and
    \[
    (M \otimes_A B)_{\fk{q}} = M \otimes_A B_{\fk{q}} = M \otimes_A A_{\fk{p}} \otimes_{A_{\fk{p}}} B_{\fk{q}} = M_{\fk{p}} \otimes_{A_{\fk{p}}} B_{\fk{q}}.
    \]
    The rank of $\cur{F}'$ is then the $B_{\fk{q}}$-length of $M_{\fk{p}} \otimes_{A_{\fk{p}}} B_{\fk{q}}$. By \cite[\href{https://stacks.math.columbia.edu/tag/02M1}{Lemma 02M1}]{stacks-project}, we have
    \[
    \length_{B_{\fk{q}}}(M_{\fk{p}} \otimes_{A_{\fk{p}}} B_{\fk{q}}) = \length_{A_{\fk{p}}}(M_{\fk{p}}) \length_{B_{\fk{q}}}(B_{\fk{q}}/\fk{p} B_{\fk{q}}).
    \]
    Since $\fk{q} = \sqrt{\fk{p}B}$, the module $\length_{B}(B_{\fk{q}}/\fk{p} B_{\fk{q}})$ is annihilated by a finite power of $\fk{q}B_{\fk{q}}$ which is the maximal ideal of $B_{\fk{q}}$. Since the ring $B_{\fk{q}}$ is noetherian, we conclude that $\length_{B_{\fk{q}}}(B_{\fk{q}}/\fk{p} B_{\fk{q}})$ is finite. This is the multiplier $C(X)$, which is non-zero. To see the last claim of the lemma, observe that $X_{\red} = \spec A/{\fk{p}}$, and following the argument above we have
    \[
    C(X_{\red}) = \length_{A/\fk{q} \otimes k'}(A/\fk{q} \otimes k').
    \]
    The equality now follows from the isomorphism
    \[
    B_{\fk{q}}/\fk{p} B_{\fk{q}} = (A \otimes k')/\fk{p}(A \otimes k') \cong (A/\fk{p}) \otimes k',
    \]
    and the fact that computing length over $B_{\fk{q}}$ is the same as computing length over $B_{\fk{q}}/\fk{p}B_{\fk{q}}$. 
    
\end{proof}

Let $X$ be a proper scheme over a field $k$ of characteristic $p > 0$. Let $\cur{I}$ be its ideal sheaf of nilpotents. Let $F: X \to X$ be the absolute Frobenius. Then for some positive integer $n$, the map of sheaves $(F^\#)^n: \cur{O}_X \to \cur{O}_X$ sending $f$ to $f^{p^n}$ kills $\cur{I}$, which means that it factors through the quotient map $\cur{O}_X \to \cur{O}_X/\cur{I}$, and the morphism factors as morphisms of schemes
\[
X \xrightarrow{\pi} X_{\red} \xrightarrow{i} X.
\]
The morphism $\pi$ has the following properties.
\begin{enumerate}
    \item The morphism $\pi$ is the identity map on the underlying topological space.
    \item If $\cur{L}$ is a line bundle on $X$, then $\pi^*\cur{L}_{\red} \cong \cur{L}^{\otimes p^n}$.
    \item The morphism $\pi$ is finite.
\end{enumerate}

\begin{Lemma}\label{frobrank}
    Let $X$ be an irreducible scheme of dimension $d$ over a perfect field $k$ of characteristic $p$. Let $\cur{F}$ be a coherent $\cur{O}_X$-module. Let $F$ be the absolute Frobenius. Then 
    \[\rank{F_*\cur{F}} = p^d\rank{\cur{F}}.\]
\end{Lemma}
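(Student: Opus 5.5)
The plan is to reduce to a statement about finite field extensions at the generic point, and then to handle the nilpotent structure by a filtration argument.

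\emph{Step 1: localize.} Let $\eta$ be the generic point and $R=\cur{O}_{X,\eta}$, an artinian local ring with residue field $K=k(X_{\red})$. Then $\rank F_*\cur{F}$ is the length of $(F_*\cur{F})_\eta$ over $R$. Since $F$ is the identity on points, this module is $\cur{F}_\eta$ with $R$ acting through $r\mapsto r^p$. So we must compare $\length_R(M)$ with $\length_R(F_*M)$ for a finite $R$-module $M$. Such a module is finite over the Frobenius twist: $X$ is of finite type over a perfect field, so $F$ is finite.

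\emph{Step 2: reduce to the residue field.} Choose a composition series of $M$ whose successive quotients are all isomorphic to $K$. Restriction of scalars along Frobenius is exact, so $F_*$ of this series is a filtration of $F_*M$ with quotients $F_*K$. Length is additive, hence
\[
\length_R(F_*M)=\length_R(M)\cdot \length_R(F_*K).
\]
Now $F_*K$ is killed by the maximal ideal $\fk{m}$ of $R$: if $a\in\fk{m}$, then $a$ acts by multiplication by $\bar a^p=0$ in $K$. Therefore $\length_R(F_*K)=\dim_K(F_*K)$, where $K$ acts on itself through Frobenius. This dimension equals $[K:K^p]$.

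\emph{Step 3: compute $[K:K^p]=p^d$.} Here $K$ is a finitely generated field extension of the perfect field $k$ with transcendence degree $d$. Since $k$ is perfect, $K/k$ is separably generated, so we can pick a separating transcendence basis $t_1,\dots,t_d$ with $K/k(t_1,\dots,t_d)$ finite separable. Write $L=k(t_1,\dots,t_d)$.
\begin{itemize}
\item For $L$ itself, using $k=k^p$, we have $L^p=k(t_1^p,\dots,t_d^p)$ and hence $[L:L^p]=p^d$.
\item For a finite separable extension, $K=L\cdot K^p$. Indeed $K/LK^p$ is both separable and purely inseparable, so it is trivial. Moreover $[K^p:L^p]=[K:L]$, because Frobenius is an isomorphism $K\cong K^p$ carrying $L$ to $L^p$.
\end{itemize}
Computing the degree $[K:L^p]$ in two ways then gives
\[
[K:K^p]\,[K:L]=[K:K^p]\,[K^p:L^p]=[K:L]\,[L:L^p],
\]
so $[K:K^p]=[L:L^p]=p^d$.

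I expect this last field-theoretic step to be the main obstacle. It is the only place where perfectness of $k$ enters, and some care is needed to make sure $d=\dim X$ equals $\operatorname{trdeg}_k K$, which holds since $X$ is of finite type and irreducible. Combining the steps gives $\rank F_*\cur{F}=p^d\rank\cur{F}$.
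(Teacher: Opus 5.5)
Your proof is correct and matches the paper's argument: you pass to the artinian stalk at the generic point, use a composition series with quotients $K$ to get $\length_R(F_*M)=[K:K^p]\cdot \length_R(M)$, and then get $[K:K^p]=p^d$ by comparing degrees in the square $L^p\subset L$, $K^p\subset K$ with $L=k(t_1,\dots,t_d)$. One small remark: the tower-law count only uses that $K/L$ is finite, so neither your observation $K=LK^p$ nor the separability of the transcendence basis is actually needed.
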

\begin{proof}
    If $\eta$ is the generic point of $X$, then the dimension of $X$ is equal to the transcendence degree of $K = \kappa(\eta) = K(X_{\red})$ over $k$. By Theorem 26.2 and Theorem 26.3 in \cite{Matsumura_1987}, $K$ has a separating transcendence basis. Namely, we may choose a transcendence basis $t_1, \cdots, t_d$ such that $K$ is a finite separable extension over $k(t_1, \cdots, t_d)$. We have a diagram where all arrows are inclusions
    \[\begin{tikzcd}
	{F^\#(K)} & K \\
	{k(t_1^p, \cdots, t_d^p)} & {k(t_1, \dots, t_d)}
	\arrow[from=1-1, to=1-2]
	\arrow[from=2-1, to=1-1]
	\arrow[from=2-1, to=2-2]
	\arrow[from=2-2, to=1-2].
    \end{tikzcd}\]
    The two vertical extensions have the same degree, so the two horizontal extensions have the same degree. The degree of the bottom extension is clearly $p^d$, so $[K: F^\#(K)] = p^d$.

    The rank depends only on the stalk at the generic point, so we may reduce to the following situation: $(R, \fk{m})$ is an artin local $k$-algebra such that $R/\fk{m}$ has transcendence degree $d$ over $k$, and $M$ is a finitely generated $R$-module. Then $F_*\cur{F}$ is still given by the module $M$ whose $R$-action now is 
    \[
    r \cdot m = r^p m.
    \]
    So choose a maximal chain
    \[
    0 = M_0 \subset M_1 \subset \cdots \subset M
    \]
    Then each $M_i/M_{i-1}$ is isomorphic to $R/\fk{m}$, and $R/\fk{m}$ has length $[K:F^\#(K)]$ when viewed as a module using the new action, so we see that
    \[\rank{F_*\cur{F}} = [K:F^\#(K)]\cdot \rank{\cur{F}} = p^d\rank{\cur{F}}.\]
\end{proof}

\begin{Theorem}\label{chpformula}
    Let $X$ be an irreducible proper scheme of dimension $d$ over a field $k$ of characteristic $p$ and let $\cur{L}$ be a line bundle on $X$. Let $\cur{F}^{\bullet}$ be a complex of coherent $\cur{O}_X$-modules. We have
    \[
    \limsup_{m \to \infty} \frac{h^i(X, \cur{F}^{\bullet} \otimes \cur{L}^{\otimes m})}{m^d/d!} = \sum_{s+t = i} \rank{\cur{H}^t(\cur{F}^{\bullet})} \cdot \widehat{h}^s(X_{\red}, \cur{L}_{\red}).
    \]
\end{Theorem}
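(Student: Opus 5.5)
The inequality ``$\leq$'' is already \eqref{easyineq}, so only ``$\geq$'' needs proof. I would argue in two steps: first reduce to a perfect ground field using Lemma~\ref{toperf}, then use the Frobenius factorization $X \xrightarrow{\pi} X_{\red} \xrightarrow{i} X$ together with Lemma~\ref{frobrank} to transport everything to the reduced scheme $X_{\red}$, where Theorem~\ref{redformula} applies.

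\emph{Reduction to $k$ perfect.} Let $k'$ be a perfect closure of $k$ and base change everything to $X' = X\otimes_k k'$. Cohomology commutes with flat base change, so $h^i(X',\cur{F}'^{\bullet}\otimes\cur{L}'^{\otimes m}) = h^i(X,\cur{F}^{\bullet}\otimes\cur{L}^{\otimes m})$. Since $k'/k$ is purely inseparable, $X'$ is irreducible of dimension $d$ and homeomorphic to $X$. Also $(X')_{\red}$ is the reduction of $(X_{\red})\otimes_k k'$, and the cohomology sheaves of $\cur{F}'^{\bullet}$ are $\cur{H}^t(\cur{F}^{\bullet})\otimes_k k'$.

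By Lemma~\ref{toperf}, ranks on $X'$ get multiplied by $C(X)$. Applying the same lemma to $X_{\red}$, and using the filtration argument of \eqref{easyineq} on $X_{\red}\otimes k'$, I expect
\[
\widehat{h}^s(X_{\red},\cur{L}_{\red}) = C(X_{\red})\,\widehat{h}^s((X')_{\red},\cur{L}'_{\red}).
\]
This should follow because $(X_{\red})_{k'}$ has generic rank $C(X_{\red})$ over its reduction, and the two sides have the same $h^s$ after base change. Since $C(X)=C(X_{\red})$, the constants cancel. A little care is needed to verify that the equality (not only an inequality) for $X_{\red}\otimes k'$ holds. I would get this by applying the perfect-field case itself to the scheme $X_{\red}\otimes k'$ with $\cur{F}=\cur{O}$, which is legitimate once that case is proved.

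\emph{Perfect case.} Choose $n$ such that $F^n$ factors as $X\xrightarrow{\pi}X_{\red}\xrightarrow{i}X$, with $\pi$ finite and $\pi^*\cur{L}_{\red}\cong\cur{L}^{\otimes p^n}$. Write $q=p^n$ and fix a residue $0\le r<q$. Then
\[
h^i(X,\cur{F}^{\bullet}\otimes\cur{L}^{\otimes(qm+r)}) = h^i(X_{\red},\pi_*(\cur{F}^{\bullet}\otimes\cur{L}^{\otimes r})\otimes\cur{L}_{\red}^{\otimes m}),
\]
by the projection formula and Lemma~\ref{finbasic}(a). Now apply Theorem~\ref{redformula} on the variety $X_{\red}$. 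The limsup over $m$ of the right side, divided by $m^d/d!$, equals
\[
\sum_{s+t=i}\rank\cur{H}^t(\pi_*(\cur{F}^{\bullet}\otimes\cur{L}^{\otimes r}))\cdot\widehat{h}^s(X_{\red},\cur{L}_{\red}).
\]
Since $\pi_*$ is exact on coherent sheaves (as $\pi$ is finite), we have $\cur{H}^t(\pi_*(-))=\pi_*\cur{H}^t(-)$. Here $\pi$ is $F^n$ composed with the identification $X_{\red}\subset X$, so Lemma~\ref{frobrank} gives
\[
\rank\pi_*\cur{H}^t = p^{nd}\rank\cur{H}^t(\cur{F}^{\bullet}).
\]
This requires computing the rank over $\cur{O}_{X_{\red}}$ at the generic point, with $[K:K^{q}]=q^d$; the argument is the same as in Lemma~\ref{frobrank}. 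Twisting by the line bundle $\cur{L}^{\otimes r}$ does not change ranks.

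Finally, rescale. With $N = qm+r$ we have $N^d/d! \approx q^d m^d/d!$, so the factor $p^{nd}=q^d$ cancels. Hence the limsup along the subsequence $N\equiv r \pmod q$ equals $\sum_{s+t=i}\rank\cur{H}^t(\cur{F}^{\bullet})\,\widehat{h}^s(X_{\red},\cur{L}_{\red})$, and the full limsup is the maximum over $r$, which is the same value. This in fact gives equality directly.

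\emph{Main obstacle.} The delicate points are the base-change step and the rank bookkeeping for $\pi_*$ viewed as an $\cur{O}_{X_{\red}}$-module rather than an $\cur{O}_X$-module. For the latter, I would compute the length of the generic stalk of $\pi_*\cur{G}$ over the field $K=\cur{O}_{X_{\red},\eta}$ using a composition series of $\cur{G}_\eta$: each factor is $K$, and each contributes $[K:K^q]=q^d$.
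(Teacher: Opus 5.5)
Your proposal is correct and follows the paper's proof essentially step for step. You reduce to a perfect closure via Lemma~\ref{toperf}, applying the perfect case to $X_{\red}\otimes_k k'$ so that $C(X)=C(X_{\red})$ cancels, and then combine the Frobenius factorization $F^n=i\circ\pi$, the projection formula, Theorem~\ref{redformula} on $X_{\red}$, and Lemma~\ref{frobrank} along residue classes mod $p^n$; the only cosmetic difference is that you compute the limsup exactly on every residue class, whereas the paper bounds the classes $r\neq 0$ by the class $r=0$, and neither route needs \eqref{easyineq}.
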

\begin{proof}
Let $k'$ be a perfect closure of $k$. Let $X'$, $\cur{L}'$,  $(\cur{F}^{\bullet})'$ and $\cur{H}^t(\cur{F}^{\bullet})'$ be the base changes to $k'$. We have
\[
H^i(X, \cur{F}^{\bullet} \otimes \cur{L}^{\otimes m}) \otimes_k k' = H^i(X', (\cur{F}^{\bullet})' \otimes (\cur{L}')^{\otimes m})
\]
and by Lemma \ref{toperf}, there is a constant $C(X)$ such that $\rank{\cur{H}^t(\cur{F}^{\bullet})'} = C(X)\rank{\cur{H}^t(\cur{F}^{\bullet})}$. Consider the diagram
\[\begin{tikzcd}
	{X \otimes k'} & X \\
	{X_{\red} \otimes k'} & {X_{\red}} \\
	{(X_{\red} \otimes k')_{\red}}
	\arrow[from=1-1, to=1-2]
	\arrow[from=2-1, to=1-1]
	\arrow[from=2-1, to=2-2]
	\arrow[from=2-2, to=1-2]
	\arrow["j"', from=3-1, to=2-1]
\end{tikzcd}\]
The square is a flat base change square, and $(X_{\red} \otimes k')_{\red} = (X')_{\red}$. If the result is known for perfect fields, then
\[
\begin{split}
h^i(X_{\red}, \cur{L}_{\red}^{\otimes m})
&= h^i((X_{\red} \otimes k', (\cur{L}_{\red} \otimes k')^{\otimes m}) \\
&= \rank{\cur{O}_{X_{red} \otimes k'}} \cdot h^i((X')_{\red}, (\cur{L}')_{\red}^{\otimes m}) \\
&= C(X_{\red}) \cdot h^i((X')_{\red}, (\cur{L}')_{\red}^{\otimes m})
\end{split}
\]
So we can compute
\[
\begin{split}
    \limsup_{m \to \infty} \frac{h^i(X, \cur{F}^{\bullet} \otimes \cur{L}^{\otimes m})}{m^d/d!} &= \limsup_{m \to \infty} \frac{h^i(X', (\cur{F}^{\bullet})' \otimes (\cur{L}')^{\otimes m})}{m^d/d!}\\ 
    &= \sum_{s+t = i} \rank{\cur{H}^t(\cur{F}^{\bullet})'} \cdot \widehat{h}^s(X'_{\red}, (\cur{L})'_{\red})) \\
    &= \sum_{s+t = i} C(X)\rank{\cur{H}^t(\cur{F}^{\bullet})} \cdot \widehat{h}^s(X'_{\red}, (\cur{L})'_{\red})) \\
    &= \sum_{s+t = i} C(X)\rank{\cur{H}^t(\cur{F}^{\bullet})} \cdot \frac{1}{C(X_{\red})} \cdot \widehat{h}^s(X_{\red}, \cur{L}_{\red}))\\
    &= \sum_{s+t = i} \rank{\cur{H}^t(\cur{F}^{\bullet})} \cdot\widehat{h}^s(X_{\red}, \cur{L}_{\red})).
\end{split}
\]
where we used $C(X) = C(X_{\red})$ by Lemma \ref{toperf}. Hence if the formula is true for $k'$, then it true for $k$, so we may assume the field $k$ is perfect.

Let $\pi: X \to X_{\red}$ be as in the discussion above, i.e. $F^n = i \circ \pi$. Then $F^n_* = i_* \circ \pi_*$, and $\rank F^n_*\cur{F} = \rank \pi_*\cur{F}$. For any sufficiently large $m$, we can write $m = qp^n + r$ where $0 \leq r < p^n$ and $q > 0$, and
\begin{equation}\label{chpcompute}
\begin{split}
h^i(X, \cur{F}^{\bullet} \otimes \cur{L}^{\otimes m}) &= h^i(X, \cur{F}^{\bullet} \otimes \cur{L}^{\otimes r} \otimes (\cur{L}^{\otimes p^n})^{\otimes q})) \\
&= h^i(X, \cur{F}^{\bullet} \otimes \cur{L}^{\otimes r} \otimes \pi^*\cur{L}_{\red}^{\otimes q}) \\
&= h^i(X_{\red}, R\pi_*(\cur{F}^{\bullet} \otimes \cur{L}^{\otimes r}) \otimes \cur{L}_{\red}^{\otimes q}) \\
&= h^i(X_{\red}, \pi_*(\cur{F}^{\bullet} \otimes \cur{L}^{\otimes r}) \otimes \cur{L}_{\red}^{\otimes q}) \\
\end{split}
\end{equation}
where the third equality is the projection formula, and the fourth equality is by Lemma \ref{finbasic} part a). Consider the subsequences:
\begin{equation}\label{subseq}
a_{q}^{(r)} = \frac{h^i(X_{\red}, \pi_*(\cur{F}^{\bullet} \otimes \cur{L}^{\otimes r}) \otimes \cur{L}_{\red}^{\otimes q})}{(qp^n + r)^d/d!}.
\end{equation}
For each $0 \leq r < p^n$, we have
\begin{equation}
\begin{split}
\limsup_{q \to \infty} \frac{h^i(X_{\red}, \pi_*(\cur{F}^{\bullet} \otimes \cur{L}^{\otimes r}) \otimes \cur{L}_{\red}^{\otimes q})}{(qp^n + r)^d/d!}
&\leq \limsup_{q \to \infty} \frac{h^i(X_{\red}, \pi_*(\cur{F}^{\bullet} \otimes \cur{L}^{\otimes r}) \otimes \cur{L}_{\red}^{\otimes q})}{(qp^n)^d/d!} \\
&= \frac{1}{p^{nd}}\sum_{s+t=i}\rank{\cur{H}^t(\pi_*\cur{F}^{\bullet})} \cdot \widehat{h}^s(X_{\red}, \cur{L}_{\red}) \\
&= \frac{1}{p^{nd}}\sum_{s+t=i}\rank{\pi_*\cur{H}^t(\cur{F}^{\bullet})} \cdot \widehat{h}^s(X_{\red}, \cur{L}_{\red}) \\
&= \sum_{s+t=i} \rank{\cur{H}^t(\cur{F}^{\bullet}}) \cdot \widehat{h}^s(X_{\red}, \cur{L}_{\red}) \\
\end{split}
\end{equation}
Here we used Theorem \ref{redformula} on the second line and Lemma \ref{frobrank} on the last line. At this point, we proved that we have finitely many subsequences $a_q^{(r)}$ which form a partition of the original sequence and satisfy
\[
\limsup_{q \to \infty} a_q^{(r)} \leq \limsup_{q \to \infty} a_q^{(0)}.
\]
Therefore 
\[
\limsup_{m \to \infty} \frac{h^i(X, \cur{F}^{\bullet} \otimes \cur{L}^{\otimes m})}{m^d/d!} = \limsup_{q \to \infty} a_q^{(0)} = \sum_{s+t=i} \rank{\cur{H}^t(\cur{F}^{\bullet}}) \cdot \widehat{h}^s(X_{\red}, \cur{L}_{\red}).
\]    
\end{proof}

Combining Theorem \ref{ch0formula} and Theorem \ref{chpformula} gives the proof of Theorem \ref{formula}.

\subsection{Reducible Schemes}

Until this point we have assumed our scheme $X$ is irreducible. In this subsection we will generalize our results to possibly reducible schemes.

\begin{Theorem}\label{reducibleformula}
    Let $X$ be a proper scheme of dimension $d$ over a field $k$. Let $Y_1, \cdots, Y_k$ be its irreducible components of dimension $d$ with the reduced scheme structure. Let $\cur{L}$ be a line bundle on $X$, and $\cur{F}^{\bullet}$ a bounded complex of coherent $\cur{O}_X$-modules. Denote the restriction of $\cur{L}$ and $\cur{F}^{\bullet}$ to $Y_j$ by $\cur{L}_j$ and $\cur{F}^{\bullet}_j$. Then
    \[
    \limsup_{m \to \infty} \frac{h^i(X, \cur{F}^{\bullet} \otimes \cur{L}^{\otimes m})}{m^d/d!} = \sum_{j=1}^k \sum_{s+t=i} \rank{\cur{H}^t(\cur{F}_j^{\bullet})} \cdot \widehat{h}^s(Y_{j}, \cur{L}_{j}).
    \]
\end{Theorem}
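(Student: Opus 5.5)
Throughout, I read $\rank{\cur{H}^t(\cur{F}^{\bullet}_j)}$ as the length of $\cur{H}^t(\cur{F}^{\bullet})$ at the generic point $\eta_j$ of $Y_j$, taken over $\cur{O}_{X,\eta_j}$. This is the reading for which the formula is consistent with Theorem \ref{formula} when $X$ is irreducible. The plan is to split $X$ into its top-dimensional components up to a lower-dimensional error, and then apply Theorem \ref{formula} to each component.

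\textbf{Step 1: set up the components.} Let $Z \subset X$ be the union of three closed sets:
\begin{itemize}
    \item all irreducible components of dimension $<d$;
    \item all pairwise intersections $Y_j \cap Y_l$ with $j \neq l$;
    \item the closure of the embedded points lying on the $Y_j$.
\end{itemize}
Then $\dim Z \leq d-1$, and $U = X - Z$ is a disjoint union of opens $U_1 \sqcup \cdots \sqcup U_k$ with $U_j$ dense in $Y_j$. Let $X_j$ be the scheme-theoretic closure of $U_j$ in $X$, with closed immersion $\iota_j: X_j \to X$. Each $X_j$ is an irreducible proper scheme of dimension $d$ with $(X_j)_{\red} = Y_j$, and $X_j \cap U = U_j$ as schemes.

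\textbf{Step 2: replace $\cur{F}^{\bullet}$ by its pieces.} Apply $\iota_j^*$ termwise to get a bounded complex $\iota_j^*\cur{F}^{\bullet}$ of coherent sheaves on $X_j$. Consider the natural map
\[
\cur{F}^{\bullet} \to \bigoplus_j \iota_{j*}\iota_j^*\cur{F}^{\bullet}.
\]
Over $U$ this map is an isomorphism of complexes, because $U$ is the disjoint union of the opens $U_j = X_j \cap U$. Hence its cone is supported on $Z$. By Lemma \ref{suppdiff}, the limsup for $\cur{F}^{\bullet}$ equals the limsup for the direct sum. Since $\iota_j$ is a closed immersion, the projection formula gives
\[
h^i\bigl(X, \iota_{j*}\iota_j^*\cur{F}^{\bullet} \otimes \cur{L}^{\otimes m}\bigr) = h^i\bigl(X_j, \iota_j^*\cur{F}^{\bullet} \otimes \cur{L}|_{X_j}^{\otimes m}\bigr).
\]
Moreover $\cur{H}^t(\iota_j^*\cur{F}^{\bullet})_{\eta_j} = \cur{H}^t(\cur{F}^{\bullet})_{\eta_j}$, so the ranks agree with those in the statement. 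Theorem \ref{formula} applied to each irreducible $X_j$ then computes the limsup of each individual summand as $\sum_{s+t=i} \rank{\cur{H}^t(\cur{F}^{\bullet}_j)} \cdot \widehat{h}^s(Y_j, \cur{L}_j)$.

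\textbf{Step 3 (main obstacle): limsup of a finite sum.} We have $h^i(X,\cdots) = \sum_j h^i(X_j,\cdots)$ up to $O(m^{d-1})$, and this gives the inequality ``$\leq$'' immediately. The reverse inequality fails for general sequences, since the summands might peak along different subsequences of $m$.

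To get equality, I would not take limsups componentwise. Instead I would rerun the arguments of Theorems \ref{ch0formula} and \ref{chpformula} on the disjoint union $X^\sqcup = \bigsqcup_j X_j$ with the line bundle $\bigsqcup_j \cur{L}|_{X_j}$. This reduces everything to the single reduced scheme $\bigsqcup_j Y_j$ and a bounded complex on it:
\begin{itemize}
    \item in characteristic $0$, through the family $\cur{E}^{\bullet}$ over $\A^1_k$ and semicontinuity;
    \item in characteristic $p$, through the Frobenius factorization $\pi$.
\end{itemize}
For the reduced disjoint union, the proof of Theorem \ref{redformula} reduces to the split complex, whose hypercohomology is $\bigoplus_{s+t=i}\bigoplus_j H^s(Y_j, \cur{H}^t \otimes \cur{L}_j^{\otimes m})$. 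On a reduced component $Y_j$ the sheaf $\cur{H}^t$ may be replaced, up to lower-dimensional support, by a direct sum of $r_j = \rank{\cur{H}^t(\cur{F}^{\bullet}_j)}$ copies of $\cur{O}_{Y_j}$, via \eqref{sheafperturb}. Its cohomology is then literally $\bigoplus_j H^s(Y_j, \cur{L}_j^{\otimes m})^{\oplus r_j}$.

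So the whole problem reduces to whether the limsup of $\sum_j r_j\, h^s(Y_j, \cur{L}_j^{\otimes m})/(m^d/d!)$ splits as a sum of limsups. I expect this to be the real difficulty. My first attempt would be to show that each of these sequences has a limit, using the limit results of Section \ref{section4}; this certainly works for the volume, $s=0$. If the limit is not available for higher $s$, the formula would have to be interpreted with the limsup taken jointly over the components, and I would try to check whether the statement intends that reading.
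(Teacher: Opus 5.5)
Your Steps 1--2 are, up to cosmetic differences, the paper's entire argument. The paper takes $U_j = Y_j - \bigcup_{r\neq j}Y_r$ over all components and lets $Z_j$ be the scheme-theoretic closure of $U_j$. It notes that the cone of $\cur{F}^{\bullet}\to\bigoplus_j i_{Z_j,*}i_{Z_j}^*\cur{F}^{\bullet}$ is supported in dimension $<d$ and invokes Lemma \ref{suppdiff}. It then applies the irreducible case to each $Z_j$, with the lower-dimensional ones contributing $0$. Discarding embedded points in your Step 1 is unnecessary, since the scheme-theoretic closure of an open subscheme restricts to that subscheme on the open, but it is harmless. Your reading of the rank as a length over $\cur{O}_{X,\eta_j}$ is also what the paper's proof actually computes: the irreducible case is applied to the non-reduced $Z_j$, not to the reduced $Y_j$ named in the statement.

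At the point where you stop, the paper simply writes
\[
\limsup_{m\to\infty}\frac{h^i\bigl(X,\bigl(\bigoplus_j i_{Z_j,*}i_{Z_j}^*\cur{F}^{\bullet}\bigr)\otimes\cur{L}^{\otimes m}\bigr)}{m^d/d!}=\sum_{j}\limsup_{m\to\infty}\frac{h^i(Z_j, i_{Z_j}^*\cur{F}^{\bullet}\otimes\cur{L}^{\otimes m})}{m^d/d!}
\]
and moves on. You are right that interchanging $\limsup$ with a finite sum is not automatic. So your proposal is incomplete exactly there, and the paper does not supply the missing step either: neither argument proves the ``$\geq$'' direction.

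What is needed is that all but at most one of the sequences $h^i(Z_j,i_{Z_j}^*\cur{F}^{\bullet}\otimes\cur{L}^{\otimes m})/(m^d/d!)$ converge, or some other reason their oscillations line up. By Theorem \ref{limit} this would follow if the $\widehat{h}^s(Y_j,\cur{L}_j)$ were limits. That is precisely the limsup-versus-limit problem the paper itself calls open in general.

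Rerunning Theorems \ref{ch0formula} and \ref{chpformula} on $\bigsqcup_j X_j$ cannot help, since it returns the same sum. Your last reduction also suppresses the sum over $s+t=i$, which raises the same issue. Indeed, the split case of Theorem \ref{redformula} performs this interchange over $s$ without comment. So for genuine complexes, the single-component input you take from Theorem \ref{formula} carries the same caveat.

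What your argument does prove rigorously is ``$\leq$'' unconditionally. It also gives equality whenever all but at most one of the component sequences converge. An example is $h^0$ of a single coherent sheaf, where Theorem \ref{singlelimit} and the existence of volumes apply.
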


Here the use of the notation $\rank$ is consistent with before, except it is understood to be computed over the correct generic point: if $\cur{G}$ is a coherent sheaf on $Y_j$ with generic point $\eta_j$, then $\rank{\cur{G}}$ means $\length_{\cur{O}_{Y_j, \eta_j}}(\cur{G}_{\eta_j})$.

\begin{proof}
    Let $Y_1, \cdots, Y_k$ be irreducible components of $X$ of maximal dimension, and let $Y_{k+1}, \cdots, Y_l$ be all other irreducible components. For each $1 \leq j \leq l$, take the non-empty open subschemes $U_j \subset Y_j$ given by $U_j = Y_j - \cup_{r \neq j} Y_r$ (note that $U_j$ is open in both $Y_j$ and $X$), and let $Z_j$ be the scheme theoretic closure of $U_j$. Observe that the cone of the map 
    \begin{equation}\label{componentmap}
    \cur{F}^{\bullet} \to \bigoplus _{j=1}^l i_{Z_j,*}i_{Z_j}^*\cur{F}^{\bullet} 
    \end{equation}
    is zero on each $U_j$: when restricted to $U_j$, for $j \neq r$ the complex $i_{Z_r,*}i_{Z_r}^*\cur{F}^{\bullet}|_{U_j}$ is zero because $U_j$ is disjoint from $Z_r$, so \eqref{componentmap} becomes
    \[
    \cur{F}^{\bullet}|_{U_j} \to (i_{Z_j,*}i_{Z_j}^*\cur{F}^{\bullet})|_{U_j}
    \]
    which is an isomorphism because $U_j$ is contained in $Z_j$. Hence the cone of \eqref{componentmap} is supported on $X - \cup_j U_j$ which is a closed subset of lower dimension. By Lemma \ref{suppdiff}, we have
    \[
    \begin{split}
        \limsup_{m \to \infty} \frac{h^i(X, \cur{F}^{\bullet} \otimes \cur{L}^{\otimes m})}{m^d/d!} &= \limsup_{m \to \infty} \frac{h^i\left(X, \left(\bigoplus_j^{l} i_{Z_j,*}i_{Z_j}^*\cur{F}^{\bullet}\right) \otimes \cur{L}^{\otimes m}\right)}{m^d/d!} \\ 
        &= \sum_{j=1}^l \limsup_{m \to \infty} \frac{h^i(Z_j, i_{Z_j}^*\cur{F}^{\bullet} \otimes \cur{L}^{\otimes m})}{m^d/d!}.
    \end{split}
    \]
    By construction, $Y_j$ equipped with the reduced scheme structure is exactly $(Z_j)_{\red}$, and irreducible components of lower dimension will give $0$ in the sum by Lemma \ref{suppbd}. So by the results in the irreducible case (Theorem \ref{ch0formula} and Theorem \ref{chpformula}), we have
    \[
    \sum_{j=1}^l \limsup_{m \to \infty} \frac{h^i(Z_j, i_{Z_j}^*\cur{F}^{\bullet} \otimes \cur{L}^{\otimes m})}{m^d/d!} = \sum_{j=1}^k \sum_{s+t = i} \rank{\cur{H}^t(\cur{F}_j^{\bullet})} \cdot \widehat{h}^s(Y_j, \cur{L}_j).
    \]
\end{proof}

\section{Convergence of Volumes of Line Bundles}\label{section4}

In this section we will prove results on when the defining limsups of the volume and higher asymptotic cohomology functions of a line bundle are in fact limits. We will prove general results and then deduce Theorem \ref{volumelimit}.

\begin{Theorem}\label{limit}
    Let $X$ be a proper scheme of dimension $d$ over a field $k$, and let $\cur{L}$ be a line bundle on $X$. Let $\cur{F}^{\bullet}$ be a bounded complex of coherent $\cur{O}_X$-modules. Let $i$ be a non-negative integer. The sequence 
    \[
    \frac{h^i(X, \cur{F}^{\bullet} \otimes \cur{L}^{\otimes m})}{m^d/d!}
    \]
    converges if 
    \[
    \widehat{h}^j(X_{\red}, \cur{L}_{\red}) = \limsup_{m \to \infty} \frac{h^i(X_{\red}, \cur{L}_{\red}^{\otimes m})}{m^d/d!}
    \]
    exists as a limit for all $j \geq 0$.
\end{Theorem}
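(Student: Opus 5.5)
The plan is to show that the $\liminf$ of the sequence is at least its $\limsup$. The upper bound is already available: by Theorem \ref{reducibleformula} (or Theorem \ref{formula} in the irreducible case), the $\limsup$ equals $\sum_{s+t=i}\rank\cur{H}^t(\cur{F}^{\bullet})\cdot\widehat{h}^s(X_{\red},\cur{L}_{\red})$. So it suffices to rerun the proofs of Section \ref{section3} with $\liminf$ in place of $\limsup$ and show they yield the same lower bound once the $\widehat{h}^s(X_{\red},\cur{L}_{\red})$ are honest limits.

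\textbf{Step 1: the tools survive with $\liminf$.} Lemma \ref{suppdiff} holds verbatim for $\liminf$, since the two sequences differ by $O(m^{d-1})$. Lemma \ref{suppbd} also holds verbatim. Hence every reduction in the paper that replaces a complex by another one agreeing off a lower-dimensional closed set preserves $\liminf$ as well. This covers Deligne's formula in Theorem \ref{redformula}, the map $\cur{F}^{\bullet}\to b_*b^*\cur{F}^{\bullet}$ in Theorem \ref{ch0formula}, and the component decomposition \eqref{componentmap}.

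\textbf{Step 2: the reduced irreducible case.} Let $X$ be a proper variety whose $\widehat{h}^s$ are limits. As in Theorem \ref{redformula}, I replace $\cur{F}^{\bullet}$ by $\bigoplus\cur{H}^t[-t]$. For a coherent sheaf $\cur{H}$ of rank $r$, Deligne's formula gives a map $\cur{I}^k\cur{O}_X^{r}\to\cur{H}$ that is an isomorphism on a dense open set. By Step 1 this gives $\lim h^s(\cur{H}\otimes\cur{L}^m)/(m^d/d!)=r\,\widehat{h}^s(X,\cur{L})$ as a genuine limit. Since the split complex has no differentials, the formula of Theorem \ref{redformula} holds as a limit.

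\textbf{Step 3: the nonreduced irreducible case.}
\begin{itemize}
\item Base change to the perfect closure $k'$ does not change $h^i$. Because $h^i$ on $X_{\red}$ and on $(X')_{\red}$ differ by the constant $C(X_{\red})$, the limit hypothesis transfers.
\item In characteristic $p$, each subsequence $a^{(r)}_q$ in \eqref{subseq} is, up to the factor $(qp^n)^d/(qp^n+r)^d\to1$, a sequence on $X_{\red}$ as in Step 2. So each one converges, and by Lemma \ref{frobrank} all of them converge to the same value. Finitely many subsequences with a common limit force convergence of the whole sequence.
\item In characteristic $0$, after passing to $X'$ via Lemma \ref{tosplit}, upper semicontinuity gives $h^i(X,\cur{F}^{\bullet}\otimes\cur{L}^m)\ge h^i(X_{\red,\eta},\cur{E}^{\bullet}_\eta\otimes\cur{L}_{\red,\eta}^m)$. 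The right side converges by Step 2 over $k(t)$, since flat base change preserves $h^i$ and hence the limit hypothesis. Its limit is at least the target value, so the $\liminf$ is bounded below by the formula.
\end{itemize}

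\textbf{The main obstacle: the reducible case.} The decomposition in Theorem \ref{reducibleformula} expresses the sequence as a sum over the components $Z_j$, up to $O(m^{d-1})$. To conclude, I need each $\widehat{h}^s(Y_j,\cur{L}_j)$ to be a limit, whereas the hypothesis is stated on $X_{\red}$. Applying the same decomposition to $\cur{O}_{X_{\red}}$ only says that a sum of the component sequences converges. A sum of $\liminf$s is at most the $\liminf$ of the sum, so this does not immediately give convergence of each summand. I would first try to exploit the hypothesis more cleverly, for example via line bundles or complexes on $X_{\red}$ that isolate one component, such as $i_{Y_j,*}\cur{O}_{Y_j}$, which again fall under Steps 1 and 2. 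Failing that, I would read the hypothesis componentwise, that is, assume $\widehat{h}^s(Y_j,\cur{L}_j)$ is a limit for each top-dimensional component. This is what the argument actually uses, and it holds automatically for $X$ irreducible. Under that reading, summing the convergent sequences from Step 3 over the components finishes the proof.
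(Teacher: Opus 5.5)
In the irreducible case your plan is the paper's proof: the split model $X'$ and semicontinuity over $\A^1_k$ in characteristic $0$, and the Frobenius subsequences $a^{(r)}_q$ in characteristic $p$. Your Step 2 gives $h^s(X,\cur{H}\otimes\cur{L}^{\otimes m})=\rank(\cur{H})\,h^s(X,\cur{L}^{\otimes m})+O(m^{d-1})$ on a proper variety, which makes explicit what the paper compresses into ``by assumption $b_m$ converges''. As written, though, the proposal does not prove the stated theorem, for two reasons.

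\textbf{The reducible case.} Your worry is justified, and the paper simply says ``we immediately reduce to the case where $X$ is irreducible.'' After \eqref{componentmap} and Lemma \ref{suppdiff}, the hypothesis only controls the sum $\sum_j h^s(Y_j,\cur{L}_j^{\otimes m})$ over the top-dimensional components. But $\cur{F}^{\bullet}$ can weight the components differently: for $X=Y_1\sqcup Y_2$ and $\cur{F}=\cur{O}_{Y_1}$, the sequence in question is just $h^i(Y_1,\cur{L}_1^{\otimes m})/(m^d/d!)$. Your first remedy is circular. Step 2 needs an integral base, so applying it to $i_{Y_j,*}\cur{O}_{Y_j}$ requires exactly the missing convergence of $h^s(Y_j,\cur{L}_j^{\otimes m})/(m^d/d!)$. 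Moreover, the hypothesis involves no line bundle other than $\cur{L}_{\red}$. What your argument (and the paper's) proves is therefore the theorem for irreducible $X$, or for arbitrary $X$ assuming each $\widehat{h}^s(Y_j,\cur{L}_j)$ with $\dim Y_j=d$ is a limit. You should state the result that way rather than present it as a reading of the given hypothesis. This costs nothing for Theorem \ref{volumelimit}, since volumes on proper varieties are limits.

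\textbf{Transfer to the perfect closure.} The paper skips this step by writing ``suppose $k$ is perfect.'' Your justification fails because there is no identity $h^j(X_{\red},\cur{L}_{\red}^{\otimes m})=C(X_{\red})\,h^j((X')_{\red},(\cur{L}')_{\red}^{\otimes m})$ for each $m$. It fails already for $j=m=0$ when $X_{\red}$ is a plane curve that is not geometrically reduced, such as $sx^p+ty^p=z^p$ over $\mathbf{F}_p(s,t)$: the left side is $1$ and the right side is at least $C(X_{\red})=p$. The displayed chain in the proof of Theorem \ref{chpformula} only yields an equality of $\limsup$s, obtained by applying the perfect-field case to $Y=X_{\red}\otimes_k k'$, and an equality of $\limsup$s transfers no convergence.

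What works is the following.
\begin{enumerate}
\item By flat base change, $h^j(Y,\cur{L}_Y^{\otimes m})=h^j(X_{\red},\cur{L}_{\red}^{\otimes m})$, so the normalized sequence converges on $Y$.
\item With $\pi\colon Y\to Y_{\red}$ the Frobenius factorization, $h^j(Y,\cur{L}_Y^{\otimes qp^n})=h^j(Y_{\red},\pi_*\cur{O}_Y\otimes\cur{L}_{Y_{\red}}^{\otimes q})$.
\item By Lemmas \ref{toperf} and \ref{frobrank}, $\rank\pi_*\cur{O}_Y=p^{nd}C(X_{\red})$. Your Step 2 estimate on the variety $Y_{\red}=(X')_{\red}$ then turns the right side of the previous item into $p^{nd}C(X_{\red})\,h^j(Y_{\red},\cur{L}_{Y_{\red}}^{\otimes q})+O(q^{d-1})$.
\item Dividing by $(qp^n)^d/d!$ shows that $h^j((X')_{\red},(\cur{L}')_{\red}^{\otimes q})/(q^d/d!)$ converges for every $j$, which is the hypothesis the perfect case needs.
\end{enumerate}
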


\begin{proof}
    We immediately reduce to the case where $X$ is irreducible. Again we need to treat the case of $\ch k = 0$ and $\ch k = p > 0$ separately.

    First suppose $\ch k = 0$, and assume there is a morphism $\pi: X \to X_{\red}$. As in the proof of Theorem \ref{ch0formula}, let
    \[
    \cur{E}^{\bullet} = (\pi \times \id)_*(p_1^*\cur{F}^{\bullet} \otimes \cur{M})
    \]
    where $\cur{M}$ is the line bundle defined by \eqref{linebundle}. 
    Then
    \[
    a_m \coloneqq \frac{h^i(X, \cur{F}^{\bullet} \otimes \cur{L}^{\otimes m})}{m^d/d!} = \frac{h^i(X_{\red}, \cur{E}^{\bullet}_m \otimes \cur{L}_{\red}^{\otimes m})}{m^d/d!}
    \geq \frac{h^i(X_{\red, \eta}, \cur{E}_{\eta}^{\bullet} \otimes \cur{L}_{\red, \eta}^{\otimes m})}{m^d/d!} \eqqcolon b_m.
    \]
    By assumption $b_m$ converges to the limit
    \[
    \lim_{m \to \infty} b_m = \sum_{s+t=i} \rank{\cur{H}^t(\cur{E}^{\bullet})} \cdot \widehat{h}^s(X_{\red}, \cur{L}_{\red}) \geq \sum_{s+t=i} \rank{\cur{H}^t(\cur{F}^{\bullet})} \cdot \widehat{h}^s(X_{\red}, \cur{L}_{\red}),
    \]
    and we have shown $a_m$ has the limsup:
    \[
    \limsup_{m \to \infty} a_m = \sum_{s+t=i} \rank{\cur{H}^t(\cur{F}^{\bullet})} \cdot \widehat{h}^s(X_{\red}, \cur{L}_{\red})
    \]
    but $a_m \geq b_m$, so 
    \[
    \liminf_{m \to \infty} a_m \geq \liminf_{m \to \infty} b_m = \lim_{m \to \infty} b_m \geq \limsup_{m \to \infty} a_m.
    \] 
    Thus $a_m$ converges.

    In the general case, as in the proof of Theorem \ref{ch0formula}, using Lemma \ref{tosplit} we obtain a proper birational $b: X' \to X$ such that $X'$ admits a splitting $\pi: X' \to X'_{\red}$. The cone of the natural morphism $\cur{F}^{\bullet} \to b_*b^*\cur{F}^{\bullet}$ is supported away from a dense open subset, so 
    \[
    \left|h^i(X, \cur{F}^{\bullet} \otimes \cur{L}^{\otimes m}) - h^i(X, b_*b^*\cur{F}^{\bullet} \otimes \cur{L}^{\otimes m})\right| = O(m^{d-1}).
    \]
    Since 
    \[
    h^i(X, b_*b^*\cur{F}^{\bullet} \otimes \cur{L}^{\otimes m}) = h^i(X', b^*\cur{F}^{\bullet} \otimes b^*\cur{L}),
    \]
    after dividing by $m^d$ the desired result follows from the case discussed above.

    Now suppose $\ch k = p > 0$ and $k$ is perfect. As in the proof of Lemma \ref{chpformula}, let $\pi: X \to X_{\red}$ be the morphism such that $F^n = X \xrightarrow{\pi} X_{\red} \to X$ where $F$ is the absolute Frobenius of $X$ and $n$ is a fixed large integer. For any positive integer $m > p^n$, we may write $m = qp^n + r$ where $q > 0$ and $0 \leq r < p^n$. With the same notation \eqref{subseq} as in the proof of Theorem \ref{chpformula}, under the current hypothesis we know that $a_q^{(0)}$ is convergent, and for $0 < r < p^n$
    \[
    a_q^{(r)} = \frac{h^i(X_{\red}, \pi_*(\cur{F}^{\bullet} \otimes \cur{L}^{\otimes r}) \otimes \cur{L}_{\red}^{\otimes q})}{(qp^n)^d/d!} \cdot \frac{(qp^n)^d}{(qp^n+r)^d}
    \]
    is a product of two convergent sequences and the second one has limit $1$, so it also converges to the same limit as $a_q^{(0)}$. It then follows that the sequence
    \[
    \frac{h^i(X, \cur{F}^{\bullet} \otimes \cur{L}^{\otimes m})}{m^d/d!}
    \]
    converges to the same limit.
    
\end{proof}

\begin{Theorem}\label{singlelimit}
    Let $X$ be a proper scheme of dimension $d$ over a field $k$, and let $\cur{L}$ be a line bundle on $X$. Let $\cur{F}$ be a coherent $\cur{O}_X$-modules. Let $i$ be a non-negative integer. The sequence 
    \[
    \frac{h^i(X, \cur{F} \otimes \cur{L}^{\otimes m})}{m^d/d!}
    \]
    converges if 
    \[
    \widehat{h}^i(X, \cur{L}) = \limsup_{m \to \infty} \frac{h^i(X_{\red}, \cur{L}_{\red}^{\otimes m})}{m^d/d!}
    \]
    exists as a limit.
\end{Theorem}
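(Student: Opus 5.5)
The plan is to follow the proof of Theorem \ref{limit}, but now exploiting that $\cur{F}$ is a single sheaf, so only the single index $i$ matters. The reason the earlier theorem needed all $\widehat{h}^j$ is the hypercohomology spectral sequence: for a complex, $h^i$ mixes $h^s$ of the cohomology sheaves with $s+t=i$. For a sheaf $\cur{F}$ (concentrated in degree $0$), every object we build should remain a sheaf, or a complex whose only nonzero cohomology sheaf sits in degree $0$. Then the only term in the formulas of Theorems \ref{redformula}, \ref{ch0formula} and \ref{chpformula} is $\rank{\cur{H}^0}\cdot\widehat{h}^i(X_{\red},\cur{L}_{\red})$.

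First I reduce to the irreducible case. The map $\cur{F}\to\bigoplus_j i_{Z_j,*}i_{Z_j}^*\cur{F}$ in the proof of Theorem \ref{reducibleformula} should be taken with underived pullback, i.e. $\cur{F}\to\bigoplus_j \cur{F}|_{Z_j}$. This is a morphism of sheaves that is an isomorphism over $\bigcup U_j$, so its kernel and cokernel are supported in lower dimension. Applying Lemma \ref{suppdiff} twice, once to the kernel and once to the cokernel short exact sequences, shows the $h^i$ differ by $O(m^{d-1})$.

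In characteristic $p$, I first base change to the perfect closure; this is harmless because dimensions of cohomology are unchanged. I then use the Frobenius factorization $\pi$. Since $\pi$ is finite, $\pi_*(\cur{F}\otimes\cur{L}^{\otimes r})$ is a coherent sheaf on the variety $X_{\red}$. Hence by (\ref{sheafperturb}) the subsequence $a^{(r)}_q$ equals a nonzero rank times $h^i(X_{\red},\cur{G}\otimes\cur{L}_{\red}^{\otimes q})/(q^d/d!)$, suitably rescaled. I need this to converge, not merely to have the right limsup. To get convergence, I use the argument of Theorem \ref{redformula}/K\"uronya: $\cur{G}$ and $\cur{O}^{\oplus\rank\cur{G}}$ agree on a dense open set. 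By Lemma \ref{extendfrompoint}, this gives sheaf maps $\cur{I}^k\cur{G}\to\cur{O}^{\oplus r}$ and $\cur{I}^k\cur{G}\to\cur{G}$ with kernels and cokernels of lower-dimensional support. Each of these changes $h^i$ by $O(q^{d-1})$, so $h^i(\cur{G}\otimes\cur{L}_{\red}^q)=\rank\cur{G}\cdot h^i(\cur{L}_{\red}^q)+O(q^{d-1})$. Since the hypothesis says $h^i(\cur{L}_{\red}^q)/(q^d/d!)$ converges, so does $a^{(r)}_q$, with the common limit. The limits of the different $r$ agree by the computation in Theorem \ref{chpformula}, so the whole sequence converges.

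In characteristic $0$, I pass via Lemma \ref{tosplit} to $X'$. The comparison $\cur{F}\to b_*b^*\cur{F}$ is a sheaf map that is an isomorphism generically, so Lemma \ref{suppdiff} applies as before. I then use the family $\cur{E}=(\pi\times\mathrm{id})_*(p_1^*\cur{F}\otimes\cur{M})$, which is now a sheaf on $X_{\red}\times\A^1_k$, flat over $\A^1_k$. The lower bound $a_m\ge b_m$ comes from semicontinuity, and $b_m$ converges by the preceding paragraph applied over $k(t)$. Note that $h^i$ of $\cur{L}_{\red,\eta}^{\otimes m}$ equals $h^i$ of $\cur{L}_{\red}^{\otimes m}$ by flat base change, so convergence transfers. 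The limit of $b_m$ is at least the limsup of $a_m$ computed in Theorem \ref{ch0formula}, which forces convergence of $a_m$. The main point to check carefully is that every intermediate object really is a single sheaf. The delicate spots are pushforward along finite maps, restriction to the $Z_j$, and the generic splitting, and replacing derived operations by underived ones there is what keeps other cohomological degrees from entering.
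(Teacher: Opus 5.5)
Your route is the paper's own: rerun Theorem~\ref{limit} and note that for a single sheaf only the term $\rank{\cur{F}}\cdot\widehat{h}^i(X_{\red},\cur{L}_{\red})$ appears. Your estimate $h^i(\cur{G}\otimes\cur{L}_{\red}^{\otimes q})=\rank{\cur{G}}\cdot h^i(\cur{L}_{\red}^{\otimes q})+O(q^{d-1})$ on a variety is a real improvement. Theorem~\ref{redformula} only equates limsups, and the paper takes the convergence of $b_m$ and $a^{(0)}_q$ for granted.

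What is missing, in your write-up as in the paper, is carrying the \emph{hypothesis} through the two reductions. In the reduction to irreducible components, you need convergence of $h^i(Y_j,\cur{L}_j^{\otimes m})/(m^d/d!)$ for every top-dimensional component $Y_j$. You are only given convergence of $h^i(X_{\red},\cur{L}_{\red}^{\otimes m})/(m^d/d!)$. By the same comparison map on $X_{\red}$, this equals $\sum_j h^i(Y_j,\cur{L}_j^{\otimes m})/(m^d/d!)+o(1)$. A convergent sum need not have convergent summands, and the ranks of $\cur{F}$ on the various $Y_j$ can differ. This is not just a presentational gap: for $\cur{F}=\cur{O}_{Y_1}$ the desired conclusion is exactly convergence on $Y_1$. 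So for reducible $X$ the hypothesis has to be imposed on each $Y_j$. That is harmless for Theorem~\ref{volumelimit}, since volumes converge on every variety.

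Second, in characteristic $p$, the remark that cohomology dimensions are unchanged by $-\otimes_k k'$ only handles the sequence you are proving convergent. The hypothesis you then invoke concerns $X'_{\red}=(X_{\red}\otimes_k k')_{\red}$. When $X_{\red}$ is not geometrically reduced, $X_{\red}\otimes_k k'$ is not reduced, so this is not the hypothesis you were given. Your own estimate fixes this. Let $Y=X_{\red}\otimes_k k'$ and let $\cur{L}_Y$ be the pullback of $\cur{L}_{\red}$. The Frobenius trick on $Y$ gives a finite $\pi_Y\colon Y\to Y_{\red}$ with $\pi_Y^*\cur{L}_{Y_{\red}}\cong\cur{L}_Y^{\otimes p^n}$, hence
\[
\begin{aligned}
h^i(X_{\red},\cur{L}_{\red}^{\otimes qp^n}) &= h^i(Y,\cur{L}_Y^{\otimes qp^n}) = h^i\bigl(Y_{\red},\pi_{Y*}\cur{O}_Y\otimes\cur{L}_{Y_{\red}}^{\otimes q}\bigr)\\
&= \rank{(\pi_{Y*}\cur{O}_Y)}\cdot h^i(Y_{\red},\cur{L}_{Y_{\red}}^{\otimes q})+O(q^{d-1}).
\end{aligned}
\]
The left side divided by $(qp^n)^d/d!$ is a subsequence of the convergent sequence in the hypothesis, so $h^i(Y_{\red},\cur{L}_{Y_{\red}}^{\otimes q})/(q^d/d!)$ converges. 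With this step added, and with the componentwise hypothesis in the reducible case, your argument goes through.
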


\begin{proof}
    The proof is the same as that of Theorem \ref{limit}, and in this special case where $\cur{F}^{\bullet}$ is now just a single sheaf $\cur{F}$, the sum
    \[
    \sum_{s+t=i} \rank{\cur{H}^t(\cur{F}^{\bullet})} \cdot \widehat{h}^s(X_{\red}, \cur{L}_{\red})
    \]
    just has single term $\rank{\cur{F}} \cdot \widehat{h}^i(X_{\red}, \cur{L}_{\red})$, so the current hypothesis guarantees that it exists as a limit.
\end{proof}

Now we can easily deduce Theorem \ref{volumelimit}.

\begin{proof}[Proof of Theorem \ref{volumelimit}]
    This follows now from Theorem \ref{singlelimit} with $\cur{F}^{\bullet} = \cur{O}_X$, and the known result that the volume of $\cur{L}_{\red}$ on $X_{\red}$ exists as a limit (e.g. \cite{Cutkoskygenred}).
\end{proof}

\printbibliography

\end{document}